\documentclass[11pt]{article}

\usepackage{epsfig,epsf,fancybox}
\usepackage{amsmath}
\usepackage{mathrsfs}
\usepackage{amssymb}
\usepackage{graphicx}
\usepackage{color}
\usepackage[linktocpage,pagebackref,colorlinks,linkcolor=blue,anchorcolor=blue,citecolor=blue,urlcolor=blue]{hyperref}

\textheight 8.5truein
\parskip 0.1in
\topmargin 0.25in
\headheight 0in
\headsep 0in
\textwidth 6.5truein
\oddsidemargin  0in
\evensidemargin 0in

\parindent0pt

\newtheorem{theorem}{Theorem}[section]

\newtheorem{lemma}[theorem]{Lemma}
\newtheorem{proposition}[theorem]{Proposition}
\newtheorem{definition}{Definition}[section]
\newtheorem{example}{Example}[section]

\newcommand{\Br}{\mathbb{R}}
\newcommand{\T}{\top}

\newcommand{\st}{\textnormal{s.t.}}
\newcommand{\tr}{\textnormal{Tr}\,}

\newcommand{\Diag}{\textnormal{Diag}\,}

\newcommand{\rank}{\textnormal{rank}}

\newcommand{\matr}{\boldsymbol M}\,
\newcommand{\vect}{\boldsymbol V}\,

\newcommand{\SI}{\mathbf S}
\newcommand{\RR}{\mathbf R}
\newcommand{\argmin}{\mathop{\rm argmin}}
\newcommand{\LCal}{\mathcal{L}}
\newcommand{\CCal}{\mathcal{C}}
\newcommand{\DCal}{\mathcal{D}}
\newcommand{\half}{\frac{1}{2}}
\newcommand{\K}{\mbox{$\mathbb K$}}
\newcommand{\Z}{\mbox{$\mathbb Z$}}

\newcommand{\br}{\mathbb{R}}

\newcommand{\be}{\begin{equation}}
\newcommand{\ee}{\end{equation}}
\newcommand{\ba}{\begin{array}}
\newcommand{\ea}{\end{array}}
\newcommand{\bpm}{\begin{pmatrix}}
\newcommand{\epm}{\end{pmatrix}}
\newcommand{\ACal}{\mathcal{A}}
\newcommand{\FCal}{\mathcal{F}}

\newcommand{\etal}{{et al. }}

\begin{document}

\title{Tensor Principal Component Analysis via Convex Optimization}

\author{Bo JIANG
\thanks{Department of Industrial and Systems Engineering, University of Minnesota, Minneapolis, MN 55455.{ Research of this author was supported in part by the National Science Foundation under Grant Number CMMI-1161242.}}
\and Shiqian MA
\thanks{Department of Systems Engineering and Engineering Management, The Chinese University of Hong Kong, Shatin, N. T., Hong Kong.{ Research of this author was supported in part by a Direct Grant of the Chinese University of Hong
Kong (Project ID: 4055016) and the Hong Kong Research Grants Council
(RGC) Early Career Scheme (ECS) (Project ID: CUHK 439513).}}
\and Shuzhong ZHANG
\thanks{Department of Industrial and Systems Engineering, University of Minnesota, Minneapolis, MN 55455.{ Research of this author was supported in part by the National Science Foundation under Grant Number CMMI-1161242.}}}

\date{\today}

\maketitle

\begin{abstract}

This paper is concerned with the computation of the principal components for a general tensor, known as the tensor principal component analysis (PCA) problem. We show that the general tensor PCA  problem is reducible to its special case where the tensor in question is super-symmetric with an even degree. In that case, the tensor can be embedded into a symmetric matrix. We prove that if the tensor is rank-one, then the embedded matrix must be rank-one too, and vice versa. The tensor PCA problem can thus be solved by means of matrix optimization under a rank-one constraint, for which we propose two solution methods: (1) imposing a nuclear norm penalty in the objective to enforce a low-rank solution; (2) relaxing the rank-one constraint by Semidefinite Programming. Interestingly, our experiments show that both methods can yield a rank-one solution {for almost all the randomly generated instances}, in which case solving the original tensor PCA problem to optimality. To further cope with the size of the resulting convex optimization models, we propose to use the alternating direction method of multipliers, which reduces significantly the computational efforts. Various extensions of the model are considered as well.

\vspace{0.8cm}

\noindent {\bf Keywords:} Tensor; Principal Component Analysis; Low Rank; Nuclear Norm; Semidefinite Programming Relaxation.

\vspace{0.5cm}

\noindent {\bf Mathematics Subject Classification 2010:} 15A69, 15A03, 62H25, 90C22, 15A18.

\end{abstract}

\newpage

\section{Introduction}\label{introduction}
Principal component analysis (PCA) plays an important role in applications arising from data analysis,
dimension reduction and bioinformatics etc. PCA finds a few linear combinations of the original variables. These linear combinations, which are
called principal components (PCs), are orthogonal to each other and explain most of the variance of the data. PCs provide a powerful tool to compress data along the direction of maximum variance to reach the minimum information loss. Specifically, let $\xi = (\xi_1,\ldots,\xi_m)$ be an $m$-dimensional random vector. Then for a given data matrix
$A\in\br^{m\times n}$ which consists of $n$ samples of the $m$ variables, finding the PC that explains the largest variance of the variables $(\xi_1,\ldots,\xi_m)$ corresponds to the following optimization problem:
\be\label{prob:PCA} (\lambda^*,x^*,y^*) := \min_{\lambda\in\RR, x\in\RR^m, y\in\RR^n}\|A - \lambda x y^{\T}\|. \ee
Problem \eqref{prob:PCA} is well known to be reducible to computing the largest singular value (and corresponding singular vectors) of $A$, and can be
equivalently formulated as:
\be\label{prob:PCA-eigen}
\begin{array}{ll}
\max_{x,y} & \left( \begin{array}{c} x \\ y \end{array} \right)^{\T} \left( \begin{array}{cc} 0 & A  \\ A^{\T} & 0 \end{array} \right) \left( \begin{array}{c} x \\ y \end{array} \right) \\
\mbox{s.t.} & \left\| \left( \begin{array}{c} x \\ y \end{array} \right) \right\| = 1 .
\end{array}
\ee
Note that the optimal value and the optimal solution of Problem \eqref{prob:PCA-eigen} correspond to the largest eigenvalue and the corresponding eigenvector of the symmetric matrix $\bpm 0 & A \\ A^{\T} & 0\epm$.

Although the PCA and eigenvalue problem for matrix have been well studied in the literature, the research of PCA for tensors is still lacking. Nevertheless, the tensor PCA is of great importance in practice and has many applications in computer vision~\cite{WA04},
diffusion Magnetic Resonance Imaging (MRI)~\cite{GTDPMR08,BV08,QYW10}, quantum entanglement problem~\cite{HS10}, spectral hypergraph theory~\cite{HQ12} and higher-order Markov chains~\cite{LN11}. This is mainly because in real life we often encounter
multidimensional data, such as images, video, range data
and medical data such as CT and MRI. A color image can be considered as 3D data with row, column, color in each direction, while a color video sequence can be
considered as $4$D data, where time is the fourth dimension. Moreover, it turns out that it is more reasonable to treat the multidimensional data as a tensor instead of unfolding it into a matrix. For example, Wang and Ahuja~\cite{WA04} reported that the images obtained by tensor PCA technique have higher quality than that by matrix PCA. Similar to its matrix counterpart, the problem of finding the PC that explains the most variance of a tensor $\mathcal{A}$ (with degree $m$) can be formulated as:
\be\label{prob:tensorPCA-eigen}
\begin{array}{ll}
\min & \| \mathcal{A} - \lambda x^1\otimes x^2 \otimes \cdots \otimes x^m \| \\
\mbox{s.t.} & \lambda\in \RR,\, \| x^i\| =1, i=1,2,\ldots,m,
\end{array}
\ee
which is equivalent to
\be\label{prob:tensorPCA-tensor}
\begin{array}{ll}
\max & \mathcal{A} (x^1, x^2 , \cdots , x^m)   \\
\mbox{s.t.} & \| x^i\| =1, i=1,2,\ldots,m,
\end{array}
\ee
where $\otimes$ denotes the outer product between vectors; viz.
$$
(x^1\otimes x^2 \otimes \cdots \otimes x^m)_{i_1 i_2\cdots i_m} = \prod\limits_{k=1}^m (x^k)_{i_k}.
$$

Let us call the above solution the {\em leading}\/ PC.
Once the leading PC is found, the other PCs can be computed sequentially via the so-called ``deflation'' technique. For instance, the second PC is defined as the leading PC of the tensor subtracting the leading PC from the original tensor, and so forth.
The theoretical basis of such a deflation procedure for tensors is not exact sound, 
although its matrix counterpart is well established ({see \cite{Mackey-NIPS-2008} and the references therein for more details}).
However, the deflation process does provide a heuristic way to compute multiple principal components of a tensor, albeit approximately.
Thus in the rest of this paper, we focus on finding the leading PC of a tensor.

Problem \eqref{prob:tensorPCA-tensor} is also known as the best rank-one approximation of tensor $\mathcal A$.
As we shall see later, problem~\eqref{prob:tensorPCA-tensor} can be reformulated as
\begin{equation}\label{prob:tensorPCA-Z-eigen-super-sym}
\begin{array}{ll}
\max & \mathcal{F} (x, x , \cdots , x)   \\
\mbox{s.t.} & \| x \| = 1,
\end{array}
\end{equation}
where $\FCal$ is a super-symmetric tensor.
Problem~\eqref{prob:tensorPCA-Z-eigen-super-sym} is NP-hard and is known as the maximum Z-eigenvalue problem. Note that a variety of eigenvalues and
eigenvectors of a real symmetric tensor were introduced by Lim~\cite{Lim05} and Qi~\cite{Q05} independently in 2005. Since then, various methods have been proposed to find the Z-eigenvalues~\cite{CHLZ11, QWW09, KR02, KB09, KM11}, which however may correspond only to local optimums.
In this paper, we shall focus on finding the global optimal solution of \eqref{prob:tensorPCA-Z-eigen-super-sym}.

Before proceeding
let us introduce notations that will be used throughout the paper. We denote $\RR^n$ to be the $n$-dimensional Euclidean space. A tensor is a high dimensional array of real data, usually in calligraphic letter, and is denoted as $\mathcal{A}=(\mathcal{A}_{i_1i_2\cdots i_m})_{n_1\times n_2\times \cdots\times n_m}$. The space where $n_1\times n_2\times \cdots\times n_m$ dimensional real-valued tensor resides is denoted by $\RR^{n_1\times n_2\times \cdots\times n_m}$. We call $\mathcal{A}$ super-symmetric if $n_1=n_2=\cdots=n_m$ and $\mathcal{A}_{i_1i_2\cdots i_m}$ is invariant under any permutation of $(i_1,i_2,...,i_m)$, i.e., $\ACal_{i_1i_2\cdots i_m} = \ACal_{\pi(i_1,i_2,\cdots,i_m)}$, where $\pi(i_1,i_2,\cdots,i_m)$ is any permutation of indices $(i_1,i_2,\cdots,i_m)$. The space where $\underbrace{n\times n \times \cdots \times n}_{m}$ super-symmetric tensors reside is denoted by $\SI^{n^m}$. Special cases of tensors are vector ($m=1$) and matrix ($m=2$), and tensors can also be seen as a long vector or a specially arranged matrix. For instance, the tensor space $\RR^{n_1\times n_2\times \cdots\times n_m}$ can also be seen as a matrix space
$\RR^{(n_1\times n_2\times \cdots\times n_{m_1}) \times (n_{m_1+1}\times n_{m_1+2} \times \cdots\times n_{m})}$, where the row is actually an $m_1$ array tensor space and the column is another $m-m_1$ array tensor space. Such connections between tensor and matrix re-arrangements will play an important role in this paper. As a convention in this paper, if there is no other specification we shall adhere to the Euclidean norm (i.e.\ the $L_2$-norm) for vectors and tensors; in the latter case, the Euclidean norm is also known as the Frobenius norm, and is sometimes denoted as $\|\mathcal{A}\|_F=\sqrt{\sum_{i_1,i_2,...,i_m} \mathcal{A}_{i_1i_2\cdots i_m}^2}$. For a given matrix $X$, we use $\|X\|_*$ to denote the nuclear norm of $X$, which is the sum of all the singular values of $X$.
Regarding the products, we use $\otimes$ to denote the outer product for tensors; that is, for $\mathcal{A}_1\in \RR^{n_1\times n_2\times \cdots\times n_{m}}$ and $\mathcal{A}_2\in \RR^{n_{m+1}\times n_{m+2} \times \cdots\times n_{m+\ell}}$, $\mathcal{A}_1 \otimes \mathcal{A}_2$ is in $\RR^{n_1\times n_2\times \cdots\times n_{m+\ell}}$ with
\[
(\mathcal{A}_1 \otimes \mathcal{A}_2)_{i_1i_2\cdots i_{m+\ell}} = (\mathcal{A}_1)_{i_1i_2\cdots i_m} (\mathcal{A}_2)_{i_{m+1}\cdots i_{m+\ell}}.
\]
The inner product between tensors $\ACal_1$ and $\ACal_2$ residing in the same space $\RR^{n_1\times n_2\times \cdots \times n_m}$ is denoted
\[
\mathcal{A}_1 \bullet \mathcal{A}_2 = \sum_{i_1,i_2,...,i_m} (\mathcal{A}_1)_{i_1i_2\cdots i_m} (\mathcal{A}_2)_{i_1i_2\cdots i_m}.
\]
Under this light, a multi-linear form $\mathcal{A}(x^1,x^2,...,x^m)$ can also be written in inner/outer products of tensors as
\[\mathcal{A}\bullet (x^1\otimes \cdots \otimes x^m) := \sum\limits_{i_1,\cdots, i_m }\mathcal{A}_{i_1,\cdots, i_m }(x^1\otimes \cdots \otimes x^m)_{i_1,\cdots, i_m } = \sum\limits_{i_1,\cdots, i_m }\mathcal{A}_{i_1,\cdots, i_m }\prod\limits_{k=1}^{m}{x^k_{i_k}}.\]

In the subsequent analysis, for convenience we assume $m$ to be even; i.e., $m=2d$ in \eqref{prob:tensorPCA-Z-eigen-super-sym}, where $d$ is a positive integer. As we will see later, this assumption is essentially non-restrictive. Therefore, we will focus on the following problem of computing the largest eigenvalue of an even order super-symmetric tensor:
\begin{equation}\label{prob:tensor-pca}
\begin{array}{ll} \max & \mathcal{F}(\underbrace{x,\cdots,x}_{2d}) \\ \st & \|x\|=1,
\end{array}
\end{equation}
where $\mathcal{F}$ is a $2d$-th order super-symmetric tensor.
In particular, problem~\eqref{prob:tensor-pca} can be equivalently written as
\begin{equation}\label{prob:tensor-pca1}
\begin{array}{ll} \max & \mathcal{F}\bullet \underbrace{x\otimes \cdots \otimes x}_{2d}\\ \st & \|x\|=1.
\end{array}
\end{equation}
Given any $2d$-th order super-symmetric tensor form $\mathcal{F}$, we call it {\it rank one}\/ if $\mathcal{F} = { \lambda}\underbrace{a\otimes \cdots \otimes a}_{2d}$ for some $a \in \RR^n$ and {{$\lambda \in \{ 1, -1 \}$}}. Moreover, the CP rank of $\mathcal{F}$ is defined as follows.
\begin{definition}\label{RankDef}
Suppose $\mathcal{F} \in \SI^{n^{2d}}$, the CP rank of $\mathcal{F}$ denoted by $\rank(\mathcal {F})$ is the smallest integer $r$ satisfying
\begin{equation}\label{formula:cp-rank-def}
\mathcal{F} = \sum_{i=1}^{r}\lambda_i\underbrace{a^i\otimes \cdots \otimes a^i}_{2d},
\end{equation}
where $a_i\in \RR^n, {\lambda_i \in \{ 1,-1 \}}$.
\end{definition}
{The idea of decomposing a tensor into an
(asymmetric) outer product of vectors was first introduced and studied by Hitchcock in 1927~\cite{hitchcock-27a,hitchcock-27b}. This concept of tensor-rank became popular after its rediscovery in the 1970's in the form of
CANDECOMP (canonical decomposition) by Carroll and Chang~\cite{carroll-70} and PARAFAC
(parallel factors) by Harshman~\cite{harshman-70}. Consequently, CANDECOMP and PARAFAC are further abbreviated as `CP' in the context of `CP rank' by many authors in the literature.

We remark that, the CP rank is theoretically associated with the complex number field, while in Definition~\ref{RankDef} decomposition \eqref{formula:cp-rank-def} is performed in the real domain. Though the choice of complex or real domain is immaterial for the matrices, it does make a difference in the tensor case~\cite{CGLM08}. Since we only focus on the real tensors here, throughout this paper we shall use the CP rank to denote the symmetric real rank
of a super-symmetric tensor.}

In the following, to simplify the notation, we denote $\K(n,d)=\left\{k=(k_1,\cdots, k_n)\in \Z_+^{n} \,\bigg{|}\,  \sum\limits_{j=1}^{n}k_j=d\right\}$ and
$$
\mathcal{X}_{1^{2k_1}2^{2k_2}\cdots n^{2k_n}}:=\mathcal{X}_{ \tiny \underbrace{1...1}_{2k_1}\underbrace{2...2}_{2k_2} \ldots \underbrace{n...n}_{2k_n} }.
$$
By letting $\mathcal{X}=\underbrace{x\otimes \cdots \otimes x}_{2d}$ we can further convert problem~\eqref{prob:tensor-pca1} into:
\begin{equation}\label{prob:tensor-pca2}
\begin{array}{ll} \max & \mathcal{F}\bullet \mathcal{X} \\ \st & \sum\limits_{k \in \K(n,d)}\frac{d!}{\prod_{j=1}^{n}{k_j!}}\mathcal{X}_{1^{2k_1}2^{2k_2}\cdots n^{2k_n}}  = 1,\\
& \mathcal{X} \in \SI^{n^{2d}},\;\rank(\mathcal{X})=1,
\end{array}
\end{equation}
where the first equality constraint is due to the fact that $ \sum\limits_{k \in \K(n,d)}\frac{d!}{\prod_{j=1}^{n}{k_j!}}\prod_{j=1}^{n}x_j^{2k_j}=\|x\|^{2d}=1$.

The difficulty of the above problem lies in the dealing of the rank constraint $\rank(\mathcal{X})=1$. Not only the rank function itself is difficult to deal with, but also determining the rank of a specific given tensor is already a difficult task, which is NP-hard in general \cite{H90}. To give an impression of the difficulty involved in computing tensor ranks, note that there is a particular $9 \times 9 \times 9$ tensor (cf.~\cite{K89}) whose rank is only known to be in between $18$ and $23$. One way to deal with the difficulty is to convert the tensor optimization problem~\eqref{prob:tensor-pca2} into a matrix optimization problem. A typical matricization technique is the so-called mode-$n$ matricization~\cite{KB09}. Roughly speaking, given a tensor $\mathcal {A} \in \RR^{n_1\times n_2\times \cdots\times n_m}$, its mode-$n$ matricization denoted by $A(n)$ is to arrange the $n$-th index of $\cal A$ to be the row index of the resulting matrix and merge all other indices of $\cal A$ as the column index of $A(n)$. {The precise definition of the mode-$n$ matricization is as follows.
\begin{definition}\label{def:mode-n-matricization}
For a given tensor $\mathcal{A} \in \RR^{n_1\times n_2\times\cdots \times n_m}$, the matrix $A(n)$ is the associated mode-$n$ matricization. In particular
$$
A(n)_{i_n,j} := \mathcal{A}_{i_1,i_2,\cdots , i_m}, \;\forall \; 1 \le i_k \le n_k, \; 1\le k \le m,
$$
where
\begin{equation}\label{formula:mode-n-index}j = 1+ \sum\limits_{{k=1} \atop{k\neq n} }^{m}(i_k-1)J_k,\mbox{ with }
J_k = \prod\limits_{{\ell=1}\atop{\ell \neq n}}^{k}n_\ell.
\end{equation}
\end{definition}
The so-called {\it $n$-rank} of $\cal A $ is defined by the vector $\left[\rank(A(1)),\rank(A(2)),\cdots, \rank(A(m))\right]$, where
its $n$-th component corresponds to the column rank of the mode-$n$ matrix $A(n)$.} The notion of $n$-rank has been widely used in the problems of tensor decomposition. Recently, Liu {\it et al.}~\cite{LMWY09} and Gandy {\it et al}.~\cite{Gandy-Recht-Yamada-2011} considered the low-$n$-rank tensor recovery problem, which were the first attempts to solve low-rank tensor optimization problems.
{Along this line, Tomioka {\it et al.}~\cite{TSHK2011} analyzed the statistical performance of nuclear norm relaxation of the tensor $n$-rank minimization problem.}
Chandrasekaran {\it et al.}~\cite{CRPW12} propose another interesting idea, in particular they directly apply convex relaxation to the tensor rank and obtain a new norm called tensor nuclear norm, which is numerically intractable. Thus, a further semidefinite representable relaxation is introduced. However, the authors did not provide any numerical results for this relaxation. Therefore, in the following we shall introduce a new scheme to unfold a tensor into a matrix, where we use half of the indices of tensor to form the row index of a matrix and use the other half as the column index. Most importantly, in the next section, we manage to establish some connection between the CP rank of the tensor and the rank of the resulting unfolding matrix.

\begin{definition}\label{def:matricization}
For a given { super-symmetric even-order tensor $\mathcal{F} \in \SI^{n^{2d}}$}, we define its square matricization, denoted by $\matr(\mathcal{F})\in \RR^{n^d\times n^d}$, as the following:
\[\matr(\mathcal{F})_{k \ell}:=\mathcal{F}_{i_1\cdots i_{d}i_{d+1} \cdots i_{2d}}, \quad 1\leq i_1,\ldots,i_d,i_{d+1},\ldots,i_{2d} \leq n,\]
where
\[k = \sum\limits_{j=1}^{d}(i_j-1)n^{d-j}+1,\mbox{ and }
\ell = \sum\limits_{j=d+1}^{2d}(i_j-1)n^{2d-j}+1.\]
\end{definition}
Similarly we introduce below the vectorization of a tensor.
\begin{definition}\label{def:vectorization}
The vectorization, $\vect(\mathcal{F})$, of tensor $\mathcal{F} \in \RR^{n^{m}}$ is defined as
\[\vect(\mathcal{F})_{k}:=\mathcal{F}_{i_1\cdots i_{m}},\]
where
\[k = \sum\limits_{j=1}^{m}(i_j-1)n^{m-j}+1, 1 \le i_1,\cdots, i_m \le n.\]
\end{definition}

In the same vein, we can convert a vector or a matrix with appropriate dimensions to a tensor. In other words, the inverse of the operators $\matr$ and $\vect$ can be defined in the same manner. In the following, we denote $X=\matr(\mathcal{X})$, and so
\[\tr(X) = \sum\limits_{\ell }X_{\ell, \ell}\;\mbox{with}\;\ell = \sum\limits_{j=1}^{d}(i_j-1)n^{d-j}+1.\]
If we assume $\mathcal{X}$ to be of rank one, then
\[\tr(X) = \sum\limits_{i_1,\cdots ,i_d }\mathcal{X}_{i_1\cdots i_d i_1 \cdots i_d}=\sum\limits_{i_1,\cdots ,i_d }\mathcal{X}_{i_1^2\cdots i_d^2}.\]
 In the above expression, $(i_1,\cdots ,i_d )$ is a subset of $(1,2,\ldots, n)$. Suppose that $j$ appears $k_j$ times in $(i_1,\cdots ,i_d )$ with $j=1,2,\ldots, n$ and $\sum\limits_{j=1}^{n}k_j=d$. Then for a fixed outcome $(k_1,k_2,\cdots, k_n)$, the total number of permutations $(i_1,\cdots ,i_d )$ to achieve such outcome is
\[
\left( d\atop{k_1} \right)\left( d-k_1\atop{k_2}\right) \left( d-k_1 -k_2\atop{k_3}\right)\cdots \left( d-k_1 -\cdots - k_{n-1}\atop{k_n}\right) = \frac{d!}{\prod_{j=1}^{n}{k_j!}}.
\]
Consequently,
\begin{equation} \label{trace-equality}
\tr(X)= \sum\limits_{i_1,\cdots ,i_d }\mathcal{X}_{i_1^2\cdots i_d^2} =\sum\limits_{k \in \K(n,d)}\frac{d!}{\prod_{j=1}^{n}{k_j!}}\mathcal{X}_{1^{2k_1}2^{2k_2}\cdots n^{2k_n}}.
\end{equation}
In light of the above discussion, if we further denote $F = \matr(\mathcal{F})$, then the objective in \eqref{prob:tensor-pca2} is $\mathcal{F} \bullet \mathcal{X} = \tr(FX)$, while the first constraint
$\sum\limits_{k \in \K(n,d)}\frac{d!}{\prod_{j=1}^{n}{k_j!}}\mathcal{X}_{1^{2k_1}2^{2k_2}\cdots n^{2k_n}}  = 1 \; \Longleftrightarrow \; \tr(X)=1$.
The hard constraint in \eqref{prob:tensor-pca2} is $\rank(\mathcal{X})=1$. It is straightforward to see that if $\mathcal{X}$ is of rank one, then by letting  $\mathcal{X}={\lambda}\underbrace{x\otimes \cdots \otimes x}_{2d}$ for some {$\lambda \in \{1,-1 \}$} and $\mathcal{Y}=\underbrace{x\otimes \cdots \otimes x}_{d}$, we have $\matr(\mathcal{X})={\lambda} \vect(\mathcal{Y})\vect(\mathcal{Y})^{\T}$, which is to say that matrix $\matr(\mathcal{X})$ is of rank one too. In the next section we shall continue to show that the other way around is also true.

\section{Equivalence Under the Rank-One Hypothesis}
{We first present some useful observations below.
\begin{lemma}\label{lemma:n-mode-rank1}
Suppose $\mathcal{A} \in \RR^{n^d}$ is an $n$ dimensional $d$-th order tensor and $\mathcal{A} \otimes \mathcal{A} \in \SI^{n^{2d}}$. Then we have:
\[\ba{ll} (i) & \mathcal{A}  \in \SI^{n^d}; \\
          (ii) & \mbox{the $n$-rank of}\;\mathcal{A}\;\mbox{is}\; [1,1,\cdots,1].\ea\]
\end{lemma}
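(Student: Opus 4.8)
The plan is to derive both parts directly from the defining property of super-symmetry of $\mathcal{A}\otimes\mathcal{A}$ --- namely, that its entries are invariant under \emph{every} permutation of the $2d$ index slots --- by choosing, in each case, a permutation tailored to the conclusion we want. Throughout I assume $\mathcal{A}\neq 0$ (if $\mathcal{A}=0$ then $\mathcal{A}$ is trivially super-symmetric, and the statement about the $n$-rank is understood in the nondegenerate regime), and I fix a reference tuple $(a_1,\dots,a_d)$ with $c:=\mathcal{A}_{a_1\cdots a_d}\neq 0$.

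For part (i), I would apply only those permutations of the $2d$ slots that rearrange the first block $i_1,\dots,i_d$ while leaving the second block pinned at $(a_1,\dots,a_d)$. For any permutation $\sigma$ of $\{1,\dots,d\}$, super-symmetry of $\mathcal{A}\otimes\mathcal{A}$ gives $\mathcal{A}_{i_{\sigma(1)}\cdots i_{\sigma(d)}}\,\mathcal{A}_{a_1\cdots a_d}=\mathcal{A}_{i_1\cdots i_d}\,\mathcal{A}_{a_1\cdots a_d}$; dividing by $c\neq 0$ yields $\mathcal{A}_{i_{\sigma(1)}\cdots i_{\sigma(d)}}=\mathcal{A}_{i_1\cdots i_d}$, which is exactly the super-symmetry of $\mathcal{A}$, i.e.\ $\mathcal{A}\in\SI^{n^d}$.

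For part (ii), I would use the standard fact that a matrix has rank at most one precisely when all of its $2\times 2$ minors vanish, so it suffices to show that every $2\times 2$ minor of each mode-$k$ matricization $A(k)$ is zero. A generic minor of $A(k)$ selects two rows $i_k,i_k'$ and two columns given by tuples $\mathbf{c},\mathbf{c}'$ filling the positions other than $k$; its vanishing amounts to the identity $\mathcal{A}[\mathbf{c},i_k]\,\mathcal{A}[\mathbf{c}',i_k'] = \mathcal{A}[\mathbf{c}',i_k]\,\mathcal{A}[\mathbf{c},i_k']$, where $\mathcal{A}[\cdot,\cdot]$ denotes the entry with the given column-tuple in the non-$k$ slots and the given row-value in slot $k$. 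Reading the left side as the entry of $\mathcal{A}\otimes\mathcal{A}$ with block one $=(\mathbf{c},i_k)$ and block two $=(\mathbf{c}',i_k')$, and the right side as block one $=(\mathbf{c}',i_k)$, block two $=(\mathbf{c},i_k')$, I observe that the second configuration is obtained from the first by swapping, slot-by-slot, the non-$k$ positions of the two blocks while keeping $i_k,i_k'$ fixed in slots $k$ and $d+k$. This is a genuine permutation of all $2d$ slots, so super-symmetry forces the two entries to coincide and the minor vanishes. Hence $\rank(A(k))\le 1$ for each $k$, and since $\mathcal{A}\neq 0$ no $A(k)$ is the zero matrix, giving $\rank(A(k))=1$ and thus $n$-rank $[1,1,\dots,1]$.

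The only place that needs genuine care is the bookkeeping in part (ii): I must verify explicitly that the map taking the first index configuration to the second --- fixing the two designated slots $k,d+k$ and transposing each remaining slot $j$ of the first block with slot $d+j$ of the second --- is a well-defined permutation of the full set of $2d$ positions, so that the super-symmetry hypothesis legitimately applies. Once that permutation is pinned down, the $2\times 2$ minor identities are immediate, and both parts follow; the remaining index manipulations are routine.
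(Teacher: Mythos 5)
Your proof is correct, and part (ii) is essentially the paper's own argument: the paper likewise kills the $2\times 2$ minors via a slot permutation exchanging an index between the two blocks, namely $\mathcal{F}_{i_1,\cdots,i_d,i'_1,\cdots,i'_d}=\mathcal{F}_{i'_1,i_2,\cdots,i_d,i_1,i'_2,\cdots,i'_d}$, but it does so only for the mode-$1$ unfolding and then invokes part (i) to conclude that all mode unfoldings coincide; your block-swap permutation (fix slots $k$ and $d+k$, transpose slot $j$ with slot $d+j$ for each $j\neq k$, which is indeed a product of disjoint transpositions and hence a genuine permutation, so your bookkeeping worry resolves) treats every mode directly and makes (ii) independent of (i) --- a small structural gain. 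Where you genuinely diverge is part (i): the paper expands $\left(\mathcal{A}_{i_1\cdots i_d}-\mathcal{A}_{j_1\cdots j_d}\right)^2=\mathcal{F}_{i_1,\cdots,i_d,i_1,\cdots,i_d}+\mathcal{F}_{j_1,\cdots,j_d,j_1,\cdots,j_d}-2\mathcal{F}_{i_1,\cdots,i_d,j_1,\cdots,j_d}=0$, using that all three entries of $\mathcal{F}=\mathcal{A}\otimes\mathcal{A}$ carry the same index multiset when $(j_1,\dots,j_d)$ is a permutation of $(i_1,\dots,i_d)$; this sum-of-squares identity is unconditional, whereas your division by the nonzero reference entry $c$ forces the $\mathcal{A}=0$ case split. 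That caveat is harmless --- in fact you are more careful than the paper on two points it silently elides: vanishing minors only give $\rank(A(k))\le 1$ rather than $=1$, and for $\mathcal{A}=0$ the $n$-rank is $[0,\dots,0]$, so the lemma implicitly assumes $\mathcal{A}\neq 0$ --- but if you want (i) with no nondegeneracy hypothesis at all, the paper's squaring trick is the cleaner device.
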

\begin{proof}
We denote $\mathcal{F}=\mathcal{A} \otimes \mathcal{A} \in \SI^{n^{2d}}$. For any $d$-tuples $\{i_1,\cdots,i_d\}$, and one of its permutations $\{j_1,\cdots,j_d\} \in \pi(i_1,\cdots,i_d)$, it holds that
\begin{eqnarray*}
 \left(\mathcal{A} _{i_1,\cdots,i_d}-\mathcal{A} _{j_1,\cdots,j_d}\right)^2&=&\mathcal{A} _{i_1,\cdots,i_d}^2+\mathcal{A} _{j_1,\cdots,j_d}^2-2\mathcal{A} _{i_1,\cdots,i_d}\mathcal{A}_{j_1,\cdots,j_d}\\
 &=&\mathcal{F}_{i_1,\cdots,i_d,i_1,\cdots,i_d}+\mathcal{F}_{j_1,\cdots,j_d,j_1,\cdots,j_d}-2\mathcal{F}_{i_1,\cdots,i_d,j_1,\cdots,j_d}=0,
\end{eqnarray*}
where the last equality is due to the fact that $\FCal$ is super-symmetric.
Therefore, $\mathcal{A}$ is super-symmetric.

To prove the second statement, we first observe that for any two $d$-tuples $\{i_1,\cdots,i_d\}$ and $\{i^\prime_1,\cdots,i^\prime_d\}$, due to the super-symmetry of $\cal F$, we have
$$\mathcal{A}_{i_1,\cdots,i_d}\mathcal{A}_{i^\prime_1,\cdots,i^\prime_d}=\mathcal{F}_{i_1,\cdots,i_d,i^\prime_1,\cdots,i^\prime_d}=\mathcal{F}_{i^\prime_1,i_2,\cdots,i_d,i_1,i^\prime_2,\cdots,i^\prime_d}=\mathcal{A}_{i^\prime_1,i_2,\cdots,i_d}\mathcal{A}_{i_1,i^\prime_2,\cdots,i^\prime_d}.$$
Now consider the mode-$1$ unfolding, which is the matrix $A(1)$. For any two components $A(1)_{i_1,j}$ and $A(1)_{{i}^\prime_1,{j}^\prime}$ with $j = 1+ \sum\limits_{{k=2} }^{m}(i_k-1)J_k$, $j^\prime = 1+ \sum\limits_{{k=2} }^{m}(i^\prime_k-1)J_k$ and $J_k$ is defined in \eqref{formula:mode-n-index}, the equation above implies that
$$
A(1)_{i_1,j} A(1)_{{i}^\prime_1,{j}^\prime}=\mathcal{A}_{i_1,\cdots,i_d}\mathcal{A}_{i^\prime_1,\cdots,i^\prime_d}=\mathcal{A}_{i^\prime_1,i_2,\cdots,i_d}\mathcal{A}_{i_1,i^\prime_2,\cdots,i^\prime_d}=A(1)_{i^\prime_1,j} A(1)_{{i}_1,{j}^\prime}.
$$
Therefore, every $2 \times 2$ minor of matrix $A(1)$ is zero and so $A(1)$ is of rank one. 
Moreover, since
$\mathcal{A}$ is super-symmetric, the mode-unfolded matrices are all the same. Thus, we conclude that the $n$-rank of $\mathcal{A}$ is $[1,1,\cdots,1]$.
\end{proof}
The following lemma tells us if a super-symmetric tensor is of rank one in the sense of nonsymmetric CP, then the symmetric CP rank of the tensor is also one.
\begin{lemma}\label{lemma:rank-one}If a $d$-th order tensor $\mathcal{A} = a^1\otimes a^2 \otimes  \cdots \otimes a^d$ is super-symmetric, then we have $a^i =\pm a^1$ for $i=2,\ldots,d$ and
$\mathcal{A} =\lambda\, \underbrace{a^1\otimes a^1 \otimes  \cdots \otimes a^1}_{d}$ for some $\lambda = \pm 1$.
\end{lemma}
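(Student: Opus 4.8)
The plan is to extract from super-symmetry a pointwise multiplicative identity that forces every factor $a^k$ to be proportional to $a^1$, and then to tidy up the resulting scalars into the claimed $\pm 1$ form. First I would dispose of the degenerate case $\mathcal{A}=0$ (then some factor vanishes and the statement is trivial), so assume $\mathcal{A}\neq 0$; this guarantees every $a^k\neq 0$ and that there is an index tuple $(i_1^\ast,\ldots,i_d^\ast)$ with $\prod_{k}a^k_{i_k^\ast}\neq 0$. Writing $\mathcal{A}_{i_1\cdots i_d}=\prod_{k=1}^d a^k_{i_k}$ and applying super-symmetry to the transposition that swaps the first and the $k$-th index positions gives, for every choice of indices, the equality $a^1_{i_1}a^k_{i_k}\prod_{j\neq 1,k}a^j_{i_j}=a^1_{i_k}a^k_{i_1}\prod_{j\neq 1,k}a^j_{i_j}$. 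Fixing the remaining indices at $i_j^\ast$ makes the common product nonzero, and cancelling it leaves
$$a^1_{i_1}a^k_{i_k}=a^1_{i_k}a^k_{i_1}\qquad\text{for all }i_1,i_k,$$
which says precisely that the rank-one matrix $a^1(a^k)^{\T}$ is symmetric.

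The heart of the argument is then the elementary fact that a nonzero rank-one matrix $u\,v^{\T}$ is symmetric if and only if $u$ and $v$ are parallel. This is exactly the statement that all $2\times 2$ minors $u_i v_j-u_j v_i$ vanish --- the same vanishing-minor mechanism already exploited in the proof of Lemma~\ref{lemma:n-mode-rank1} --- which is equivalent to linear dependence of $u$ and $v$. Applying this to $u=a^1$ and $v=a^k$ (both nonzero) produces a scalar $c_k\neq 0$ with $a^k=c_k\,a^1$ for every $k=2,\ldots,d$.

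Assembling the factors gives $\mathcal{A}=a^1\otimes(c_2 a^1)\otimes\cdots\otimes(c_d a^1)=\bigl(\prod_{k=2}^d c_k\bigr)\,\underbrace{a^1\otimes\cdots\otimes a^1}_{d}$, so $\mathcal{A}$ is already a symmetric rank-one tensor up to a single scalar. To reach the normalized form I would invoke the scaling freedom of any outer-product decomposition (replacing $a^k\mapsto t_k a^k$ with $\prod_k t_k=1$ leaves $\mathcal{A}$ unchanged): choosing the $t_k$ to equalize the magnitudes turns each factor into $\pm a^1$ and absorbs the overall magnitude into a single vector $a$, so that $\mathcal{A}=\lambda\,\underbrace{a\otimes\cdots\otimes a}_{d}$ with $\lambda=\sign\bigl(\prod_{k=2}^d c_k\bigr)\in\{+1,-1\}$.

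I expect the proportionality step --- symmetric rank-one implies parallel factors --- to be the only substantive part; once it is in place, the manipulation of the $c_k$ into the prescribed $\pm 1$ pattern is routine bookkeeping. The one point that requires a little care is the sign of $\lambda$: for odd $d$ it can be flipped by negating $a$, whereas for even $d$ it is intrinsic and cannot be removed, which is exactly why the statement allows $\lambda\in\{\pm 1\}$ rather than $\lambda=1$.
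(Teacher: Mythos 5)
Your proof is correct, but it takes a genuinely different route from the paper's. The paper proves the lemma in two lines by invoking Theorem 4.1 of \cite{CHLZ11} --- the Banach-type result that for a super-symmetric tensor the maximum of the multilinear form $\mathcal{A}(x^1,\ldots,x^d)$ over separate unit spheres equals the maximum of $|\mathcal{A}(x,\ldots,x)|$ over a single sphere. Since the multilinear maximum of $\mathcal{A}(x^1,\ldots,x^d)=\prod_{k}(a^k)^{\T}x^k$ is $\prod_k\|a^k\|$, there is a single unit vector $x^*$ with $|(a^i)^{\T}x^*|=\|a^i\|$ for every $i$, and the equality case of Cauchy--Schwarz forces each $a^i$ to be parallel to $x^*$, hence to $a^1$. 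Your argument reaches the same parallelism conclusion with no optimization machinery at all: the transposition identity $a^1_{i_1}a^k_{i_k}=a^1_{i_k}a^k_{i_1}$ (legitimately obtained after cancelling the nonzero spectator product at the starred indices) says $a^1(a^k)^{\T}$ is symmetric, and a nonzero symmetric rank-one matrix has parallel factors by the vanishing of $2\times 2$ minors --- the same device the paper itself uses in the proof of Lemma~\ref{lemma:n-mode-rank1}, so your proof is stylistically consistent with the surrounding material. What each approach buys: the paper's proof is shorter given the cited theorem, which it relies on elsewhere anyway; yours is self-contained, purely algebraic, and avoids importing a nontrivial external result. Yours is also more careful on two minor points where the paper is loose: the degenerate case $\mathcal{A}=0$, and the normalization --- as literally stated, $a^i=\pm a^1$ presupposes $\|a^i\|=\|a^1\|$ (e.g., $a^1\otimes(2a^1)$ is super-symmetric with $a^2\neq\pm a^1$), and both arguments really deliver parallelism $a^k=c_k a^1$, after which the rescaling freedom you describe absorbs the constants into $\lambda\in\{1,-1\}$ and a rescaled vector; your closing observation that the sign of $\lambda$ is removable exactly when $d$ is odd is likewise correct.
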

\begin{proof} Since $\mathcal{A}$ is super-symmetric, from Theorem 4.1 in \cite{CHLZ11}, we know that
$$
\max_{\|x\|=1} |\mathcal{A} (\underbrace{x,\cdots,x}_{d})|=\max_{\|x^i\|=1,i=1,\ldots,d} \mathcal{A} (x^1,\cdots,x^d)=\|a^1\|\times \|a^2\|\times \cdots \times \|a^d\| .
$$
So there must exist an $x^*$ with $\|x^*\|=1$ such that $ |(a^i)^{\top}x^*|=\|a^i\|$ for all $i$, which implies that $a^i =\pm a^1$ for $i=2,\ldots,d$, and thus the conclusion follows.
\end{proof}
We have the following proposition as the immediate consequence of the above lemmas.
\begin{proposition}\label{proposition:tensor-rank1}
Suppose $\mathcal{A} \in \RR^{n^d}$ is an $n$ dimensional $d$-th order tensor. The following two statements are equivalent:
\[\ba{ll} (i) & \mathcal{A}  \in \SI^{n^d}, \mbox{ and } \ \rank(\mathcal{A})=1; \\
          (ii) & \mathcal{A} \otimes \mathcal{A} \in \SI^{n^{2d}}.\ea\]
\end{proposition}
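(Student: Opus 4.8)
The plan is to prove Proposition~\ref{proposition:tensor-rank1} by establishing the two implications separately, drawing on Lemma~\ref{lemma:n-mode-rank1} and Lemma~\ref{lemma:rank-one} for the harder direction.

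\textbf{The direction $(i)\Rightarrow(ii)$.} This is the routine half. Assuming $\mathcal{A}\in\SI^{n^d}$ with $\rank(\mathcal{A})=1$, by Definition~\ref{RankDef} we may write $\mathcal{A}=\lambda\,\underbrace{a\otimes\cdots\otimes a}_{d}$ for some $a\in\RR^n$ and $\lambda\in\{1,-1\}$. I would then simply compute $\mathcal{A}\otimes\mathcal{A}=\lambda^2\,\underbrace{a\otimes\cdots\otimes a}_{2d}=\underbrace{a\otimes\cdots\otimes a}_{2d}$, using $\lambda^2=1$. It remains to check that this $2d$-th order tensor is super-symmetric, which is immediate since every entry is $\prod_{k=1}^{2d}a_{i_k}$, a product invariant under any permutation of the indices. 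Hence $\mathcal{A}\otimes\mathcal{A}\in\SI^{n^{2d}}$.

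\textbf{The direction $(ii)\Rightarrow(i)$.} This is where the two lemmas do the work, and I would chain them together. Starting from the hypothesis $\mathcal{A}\otimes\mathcal{A}\in\SI^{n^{2d}}$, Lemma~\ref{lemma:n-mode-rank1}(i) gives $\mathcal{A}\in\SI^{n^d}$, so $\mathcal{A}$ is already super-symmetric. Lemma~\ref{lemma:n-mode-rank1}(ii) then tells me the $n$-rank of $\mathcal{A}$ is $[1,1,\ldots,1]$; in particular the mode-$1$ matricization $A(1)$ has rank one. The next step is to translate ``rank-one matricization'' into a genuine CP decomposition of $\mathcal{A}$: since $A(1)$ is rank one, it factors as an outer product of a vector and a vector, and unfolding this factorization back into tensor form expresses $\mathcal{A}$ as $a^1\otimes a^2\otimes\cdots\otimes a^d$ for suitable vectors $a^1,\ldots,a^d$. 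At this point $\mathcal{A}$ is both super-symmetric and an (a priori asymmetric) rank-one tensor, which is exactly the hypothesis of Lemma~\ref{lemma:rank-one}. Applying that lemma yields $a^i=\pm a^1$ and $\mathcal{A}=\lambda\,\underbrace{a^1\otimes\cdots\otimes a^1}_{d}$ with $\lambda\in\{1,-1\}$, which is precisely the statement that $\rank(\mathcal{A})=1$ in the sense of Definition~\ref{RankDef}.

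\textbf{The main obstacle.} I expect the delicate step to be the passage from ``all mode-$k$ matricizations have rank one'' to the clean asymmetric factorization $\mathcal{A}=a^1\otimes\cdots\otimes a^d$. Knowing each unfolding is rank one is slightly weaker than having a single outer-product form, so I would either argue directly that the rank-one structure of $A(1)$ lets one read off $a^1$ (the column direction) and absorb the remaining indices into $a^2\otimes\cdots\otimes a^d$, then iterate on the reshaped factor; or, more cleanly, invoke the standard fact that a tensor whose $n$-rank is $[1,\ldots,1]$ is precisely a rank-one tensor in the CP sense. Once the asymmetric decomposition is in hand, Lemma~\ref{lemma:rank-one} closes the argument immediately, so the whole burden of the proof rests on correctly feeding the outputs of Lemma~\ref{lemma:n-mode-rank1} into the hypotheses of Lemma~\ref{lemma:rank-one}.
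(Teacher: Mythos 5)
Your proof is correct and takes essentially the same route as the paper: the $(i)\Rightarrow(ii)$ direction is the identical direct computation, and the $(ii)\Rightarrow(i)$ direction chains Lemma~\ref{lemma:n-mode-rank1} and Lemma~\ref{lemma:rank-one} exactly as the paper does. The only point of variation is the middle step (from $n$-rank $[1,\ldots,1]$ to nonsymmetric CP rank one), which you justify by a direct unfolding iteration or by citing the standard fact, while the paper invokes that same fact via the scalar core tensor of the smallest exact Tucker decomposition~\cite{KB09} --- a cosmetic difference only.
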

\begin{proof} We shall first show (i) $\Longrightarrow$ (ii). Suppose $\mathcal{A}  \in \SI^{n^d}$ with $\rank(\mathcal{A})=1$. Then there exists a vector $a \in \RR^{n}$ and a scaler $\lambda \in \{ 1, -1 \}$ such that
$\mathcal{A} = {\lambda} \underbrace{a\otimes a \otimes \cdots \otimes a}_{{d}}$. Consequently, $\mathcal{A} \otimes \mathcal{A} = \underbrace{a\otimes a \otimes \cdots \otimes a}_{2d} \in \SI^{n^{2d}} $.

Now we prove (ii) $\Longrightarrow$ (i). From Lemma~\ref{lemma:n-mode-rank1}, we know that $\cal A$ is super-symmetric and the $n$-rank of $\cal A$ is $[1,1,\cdots,1]$. It is well known that the $n$-rank of a tensor corresponds to the size of the {\it core tensor} associated with the {\it smallest exact Tucker decomposition}~\cite{KB09}. Consequently,
the $n$-rank of $\mathcal{A}$ is $[1,1,\cdots,1]$ means that the core tensor associated with the exact Tucker decomposition of $\mathcal{A}$ is a scalar, thus the nonsymmetric real CP rank of $\mathcal{A}$ is also one. Finally, due to Lemma~\ref{lemma:rank-one} and the fact that $\cal A$ is super-symmetric, we conclude that the symmetric CP rank of $\mathcal{A}$ is one, i.e.\ $\rank(\mathcal{A})=1$.
\end{proof}

}
Now we are ready to present the main result of this section.
\begin{theorem}\label{theorem:tensor-rank1}
Suppose $\mathcal{X} \in \SI^{n^{2d}}$ and $X = \matr(\mathcal{X})\in \RR^{n^d \times n^d}$. Then we have
$$\rank(\mathcal{X})=1\; \Longleftrightarrow\; \rank(X)=1.$$
\end{theorem}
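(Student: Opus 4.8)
The plan is to prove the two implications separately, treating the forward direction as essentially the observation already recorded at the end of Section~\ref{introduction} and reserving the real work for the converse. For the easy direction $\rank(\mathcal{X})=1 \Longrightarrow \rank(X)=1$, I would simply recall that a rank-one $\mathcal{X}\in\SI^{n^{2d}}$ can be written as $\mathcal{X}=\lambda\underbrace{x\otimes\cdots\otimes x}_{2d}$ with $\lambda\in\{1,-1\}$. Setting $\mathcal{Y}=\underbrace{x\otimes\cdots\otimes x}_{d}$ and invoking the identity $\matr(\mathcal{Y}\otimes\mathcal{Y})=\vect(\mathcal{Y})\vect(\mathcal{Y})^{\T}$, which follows by directly matching the index maps in Definitions~\ref{def:matricization} and~\ref{def:vectorization}, one gets $X=\lambda\,\vect(\mathcal{Y})\vect(\mathcal{Y})^{\T}$, manifestly of rank one.

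For the converse $\rank(X)=1\Longrightarrow\rank(\mathcal{X})=1$, I would first observe that super-symmetry of $\mathcal{X}$ forces $X=\matr(\mathcal{X})$ to be a symmetric matrix, since swapping the first block of $d$ indices with the last block leaves every entry unchanged. A symmetric matrix of rank one can always be written as $X=\epsilon\, w w^{\T}$ for some $w\in\RR^{n^d}$ and a sign $\epsilon\in\{1,-1\}$ (the sign of its unique nonzero eigenvalue, with $|$eigenvalue$|$ absorbed into $w$). Letting $\mathcal{Y}\in\RR^{n^d}$ be the tensor with $\vect(\mathcal{Y})=w$, the same matricization identity gives $\mathcal{X}=\epsilon\,\mathcal{Y}\otimes\mathcal{Y}$.

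It then remains to convert this into a symmetric rank-one decomposition of $\mathcal{X}$, and the sign $\epsilon$ is the only delicate point. In either case the tensor $\epsilon\,\mathcal{X}=\mathcal{Y}\otimes\mathcal{Y}$ is super-symmetric, because negation preserves super-symmetry; hence Proposition~\ref{proposition:tensor-rank1} applies to $\mathcal{Y}$ and tells us that $\mathcal{Y}$ is itself super-symmetric with $\rank(\mathcal{Y})=1$, so $\mathcal{Y}=\mu\underbrace{y\otimes\cdots\otimes y}_{d}$ for some $y\in\RR^n$ and $\mu\in\{1,-1\}$. Substituting back yields $\mathcal{X}=\epsilon\,\mathcal{Y}\otimes\mathcal{Y}=\epsilon\,\mu^2\underbrace{y\otimes\cdots\otimes y}_{2d}=\epsilon\underbrace{y\otimes\cdots\otimes y}_{2d}$, which is exactly a rank-one super-symmetric tensor in the sense of Definition~\ref{RankDef} (with $\lambda=\epsilon$), so $\rank(\mathcal{X})=1$.

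I expect the main obstacle to be precisely the case $\epsilon=-1$: a symmetric rank-one matrix may carry a negative eigenvalue, so $\mathcal{X}=-\mathcal{Y}\otimes\mathcal{Y}$ is \emph{not} directly of the form $\mathcal{A}\otimes\mathcal{A}$ required to invoke Proposition~\ref{proposition:tensor-rank1}. The resolution is to apply the proposition to $\mathcal{Y}$ through the super-symmetric tensor $\mathcal{Y}\otimes\mathcal{Y}=-\mathcal{X}$ rather than through $\mathcal{X}$ itself; this works only because Definition~\ref{RankDef} admits $\lambda=-1$. (Alternatively one could rerun the computation of Lemma~\ref{lemma:n-mode-rank1} verbatim with the extra minus sign, since the cross terms $-2\,\mathcal{X}_{\cdots}$ still cancel against the diagonal terms, but routing through the already-established proposition is cleaner.)
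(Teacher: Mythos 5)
Your proof is correct and takes essentially the same route as the paper: the forward direction via the identity $\matr(\mathcal{Y}\otimes\mathcal{Y})=\vect(\mathcal{Y})\vect(\mathcal{Y})^{\T}$, and the converse by writing $X$ as a symmetric rank-one matrix and invoking Proposition~\ref{proposition:tensor-rank1} to conclude that the half-order tensor is super-symmetric and rank-one. Your explicit handling of the sign $\epsilon=-1$ is in fact slightly more careful than the paper's own argument, which writes $\matr(\mathcal{X})=yy^{\T}$ without addressing the possibility of a negative eigenvalue; applying the proposition to $\mathcal{Y}\otimes\mathcal{Y}=-\mathcal{X}$ and recovering $\lambda=\epsilon$ in Definition~\ref{RankDef}, as you do, closes that small gap cleanly.
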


\begin{proof}
As remarked earlier, that $\rank(\mathcal{X})=1\; \Longrightarrow\; \rank(X)=1$ is evident. To see this, suppose $\rank(\mathcal{X})=1$ and $\mathcal{X}=\underbrace{x\otimes \cdots \otimes x}_{2d}$ for some $x \in \RR^n$. By constructing $\mathcal{Y}=\underbrace{x\otimes \cdots \otimes x}_{d}$, we have $X=\matr(\mathcal{X})=\vect(\mathcal{Y})\vect(\mathcal{Y})^{\T}$, which leads to $\rank(X)=1$.

To prove the other implication, suppose that we have $\mathcal{X} \in \SI^{n^{2d}}$ and $\matr(\mathcal{X})$ is of rank one, i.e. $\matr(\mathcal{X})=yy^{\T}$ for some vector $y\in\RR^{n^d}$. Then $\mathcal{X} = \vect^{-1}(y)\otimes \vect^{-1}(y)$, which combined with Proposition~\ref{proposition:tensor-rank1} implies $\vect^{-1}(y)$ is supper-symmetric and of rank one. Thus there exists $x \in \RR^n$ such that $\vect^{-1}(y) = \underbrace{x\otimes \cdots \otimes x}_{d}$ and $\mathcal{X}=\underbrace{x\otimes \cdots \otimes x}_{2d}$.
\end{proof}

\section{A Nuclear Norm Penalty Approach}

According to Theorem \ref{theorem:tensor-rank1}, we know that a super-symmetric tensor is of rank one, if and only if its matrix correspondence obtained via the matricization operation defined in Definition \ref{def:matricization}, is also of rank one. As a result, we can reformulate Problem \eqref{prob:tensor-pca2} equivalently as the following matrix optimization problem:
\begin{equation}\label{prob:matrix-pca}
\begin{array}{ll} \max & \tr(F X) \\
                    \st & \tr(X) =1, \ \matr^{-1}(X) \in \SI^{n^{2d}}, \\
                        & X\in\SI^{n^d \times n^d}, \ \rank(X)=1,
\end{array}
\end{equation}
where $X=\matr(\mathcal{X})$, $F = \matr(\mathcal{F})$, and $\SI^{n^d \times n^d}$ denotes the set of $n^d\times n^d$ symmetric matrices.
Note that the constraints $\matr^{-1}(X) \in \SI^{n^{2d}}$ requires the tensor correspondence of $X$ to be super-symmetric, which essentially correspond to $O(n^{2d})$ linear equality constraints. The rank constraint $\rank(X)=1$ makes the problem intractable. In fact, Problem \eqref{prob:matrix-pca} is NP-hard in general, due to its equivalence to problem \eqref{prob:tensor-pca}.

There have been a large amount of work that deal with the low-rank matrix optimization problems. Research in this area was mainly ignited by the recent emergence of compressed sensing \cite{Candes-Romberg-Tao-2006,Donoho-06}, matrix rank minimization and low-rank matrix completion problems \cite{Recht-Fazel-Parrilo-2007,Candes-Recht-2008,Candes-Tao-2009}. The matrix rank minimization seeks a matrix with the lowest rank satisfying some linear constraints, i.e.,
\be\label{prob:matrix-rank-min} \min_{X\in \RR^{n_1\times n_2}} \ \rank(X), \ \st, \ \mathcal{C}(X)=b,\ee where $b\in\RR^p$ and $\mathcal{C}:\RR^{n_1\times n_2}\rightarrow \RR^p$ is a linear operator. The results in \cite{Recht-Fazel-Parrilo-2007,Candes-Recht-2008,Candes-Tao-2009} show that under certain randomness hypothesis on the linear operator $\mathcal{C}$, with high probability the NP-hard problem \eqref{prob:matrix-rank-min} is equivalent to the following nuclear norm minimization problem, which is a convex programming problem:
\be\label{prob:nuclear-norm-min} \min_{X\in \RR^{n_1\times n_2}} \ \|X\|_*, \ \st, \ \mathcal{C}(X)=b. \ee
In other words, the optimal solution to the convex problem \eqref{prob:nuclear-norm-min} is also the optimal solution to the original NP-hard problem \eqref{prob:matrix-rank-min}.

Motivated by the convex nuclear norm relaxation, one way to deal with the rank constraint in \eqref{prob:matrix-pca} is to introduce the nuclear norm term of $X$, which penalizes high-ranked $X$'s, in the objective function. This yields the following convex optimization formulation:
\begin{equation}\label{prob:nuclear-penalty}
\begin{array}{ll} \max & \tr(F X) - \rho \| X \|_*\\
                              \st & \tr(X) =1, \ \matr^{-1}(X) \in \SI^{n^{2d}}, \\
                                  & X\in\SI^{n^d\times n^d},
\end{array}
\end{equation}
where $\rho>0$ is a penalty parameter.
It is easy to see that if the optimal solution of \eqref{prob:nuclear-penalty} (denoted by $\tilde{X}$) is of rank one, then $\|\tilde{X}\|_* = \tr(\tilde{X})=1$, which is a constant. In this case, the term $-\rho\|X\|_*$ added to the objective function is a constant, which leads to the fact the solution is also optimal with the constraint that $X$ is rank-one. In fact, Problem \eqref{prob:nuclear-penalty} is the convex relaxation of the following problem
\begin{equation*}
\begin{array}{ll} \max & \tr(F X) - \rho \| X \|_* \\
                              \st & \tr(X) =1, \ \matr^{-1}(X) \in \SI^{n^{2d}}, \\
                                  & X\in\SI^{n^d\times n^d},\ \rank(X)=1,
\end{array}
\end{equation*}
which is equivalent to the original problem ~\eqref{prob:matrix-pca} since $\rho \| X \|_* = \rho \tr(X)=\rho$.

After solving the convex optimization problem \eqref{prob:nuclear-penalty} and obtaining the optimal solution $\tilde{X}$, if $\rank(\tilde{X})=1$, we can find $\tilde{x}$ such that $\matr^{-1}(\tilde{X})=\underbrace{\tilde{x}\otimes \cdots \otimes \tilde{x}}_{2d}$, according to Theorem \ref{theorem:tensor-rank1}. In this case, $\tilde{x}$ is the optimal solution to Problem \eqref{prob:tensor-pca}. The original tensor PCA problem, or the Z-eigenvalue problem \eqref{prob:tensor-pca}, is thus solved to optimality.

Interestingly, we found from our extensive numerical tests that the optimal solution to Problem \eqref{prob:nuclear-penalty} is a rank-one matrix almost all the time. In the following, we will show this interesting phenomenon by some concrete examples. The first example is taken from \cite{KR02}.
\begin{example}We consider a super-symmetric tensor $\mathcal{F} \in \SI^{3^{4}}$ defined by
\begin{eqnarray*}
\mathcal{F}_{1111}=0.2883,\quad \mathcal{F}_{1112}=-0.0031,\quad \mathcal{F}_{1113}=0.1973,\quad \mathcal{F}_{1122}=-0.2485, \quad \mathcal{F}_{1123}=-0.2939,\\
\mathcal{F}_{1133}=0.3847,\quad \mathcal{F}_{1222}=0.2972,\quad \quad \mathcal{F}_{1223}=0.1862,\quad \mathcal{F}_{1233}=0.0919, \quad \; \mathcal{F}_{1333}=-0.3619,\\
\mathcal{F}_{2222}=0.1241,\quad \mathcal{F}_{2223}=-0.3420,\quad \; \mathcal{F}_{2233}=0.2127,\quad \mathcal{F}_{2333}=0.2727, \quad \; \mathcal{F}_{3333}=-0.3054.
\end{eqnarray*}
We want to compute the largest Z-eigenvalue of $\mathcal{F}$.
\end{example}

Since the size of this tensor is small, we used CVX \cite{cvx} to solve Problem~\eqref{prob:nuclear-penalty} with $F = \matr(\mathcal{F})$ and $\rho = 10$. It turned out that CVX produced a rank-one solution $\tilde{X} = aa^{\top}\in\RR^{3^2\times 3^2}$, where
\[ a =(0.4451,0.1649,-0.4688,0.1649,0.0611,-0.1737,-0.4688,-0.1737,0.4938)^{\top}.\]
Thus we get the matrix correspondence of $a$ by reshaping $a$ into a square matrix $A$:
\[
A=[a(1:3),a(4:6),a(7:9)]= \begin{bmatrix} 0.4451 & 0.1649 &-0.4688\\
                                                  0.1649& 0.0611 &-0.1737\\
                                                  -0.4688&-0.1737&0.4938\end{bmatrix}.
\]
It is easy to check that $A$ is a rank-one matrix with the nonzero eigenvalue being 1. This further confirms our theory on the rank-one equivalence, i.e., Theorem \ref{theorem:tensor-rank1}. The eigenvector that corresponds to the nonzero eigenvalue of $A$ is given by
\[\tilde{x} = (-0.6671,-0.2472,0.7027)^{\top},\]
which is the optimal solution to Problem \eqref{prob:tensor-pca}.

The next example is from a real Magnetic Resonance Imaging (MRI) application studied by Ghosh \etal in \cite{GTDPMR08}. In \cite{GTDPMR08}, Ghosh \etal studied a fiber detection problem in
diffusion Magnetic Resonance Imaging (MRI), where they tried to extract the geometric characteristics from an antipodally symmetric spherical function (ASSF), which can be described
equivalently in the homogeneous polynomial basis constrained to the sphere. They showed that it is possible to extract the maxima and minima of an ASSF by computing the stationary points of a problem in the form of~\eqref{prob:tensor-pca} with $d=2$ and $n=4$.
\begin{example}The objective function $\mathcal{F}({x,x,x,x})$ in this example is given by
\[\begin{array}{rrrrrrrrrr}
&0.74694 x_1^4& -& 0.435103 x_1^3x_2& +& 0.454945 x_1^2x_2^2& +& 0.0657818 x_1x_2^3 & +& x_2^4 \\
+& 0.37089 x_1^3x_3 & -& 0.29883 x_1^2x_2x_3 & - & 0.795157 x_1x_2^2x_3 & + & 0.139751 x_2^3x_3  & + & 1.24733 x_1^2x_3^2\\
+& 0.714359 x_1x_2x_3^2 & +& 0.316264 x_2^2x_3^2 & -& 0.397391 x_1x_3^3  & -& 0.405544 x_2x_3^3& + & 0.794869 x_3^4.
\end{array} \]
\end{example}
Again, we used CVX to solve problem~\eqref{prob:nuclear-penalty} with $F = \matr(\mathcal{F})$ and $\rho = 10$, and a rank-one solution was found with $\tilde{X}= aa^{\T}$, with
\[a=(0.0001, 0.0116, 0.0004, 0.0116, 0.9984, 0.0382, 0.0004, 0.0382, 0.0015)^{\T}.\]
By reshaping vector $a$, we get the following expression of matrix $A$:
\[ A = [a(1:3),a(4:6),a(7:9)] = \begin{bmatrix}        0.0001      & 0.0116 &  0.0004\\
                                                       0.0116      & 0.9984 &  0.0382\\
                                                       0.0004      & 0.0382 &  0.0015\end{bmatrix}.\]
It is easy to check that $A$ is a rank-one matrix with 1 being the nonzero eigenvalue. The eigenvector corresponding to the nonzero eigenvalue of $A$ is given by \[\tilde{x} = (0.0116,0.9992,0.0382)^{\T},\] which is also the optimal solution to the original problem \eqref{prob:tensor-pca}.

Henceforth we conduct some numerical tests on randomly generated examples. We construct $4$-th order tensor $\cal T$ with its components drawn randomly from i.i.d.\ standard normal distribution. The super-symmetric tensor $\cal F$ in the tensor PCA problem is obtained by symmetrizing $\cal T$.
All the numerical experiments in this paper were conducted on an Intel Core i5-2520M 2.5GHz computer with 4GB of RAM, and all the default settings of {Matlab 2012b and CVX 1.22} were used for all the tests. We choose $d=2$ and the dimension of $\cal F$ in the tensor PCA problem from $n=3$ to $n=9$. We choose $\rho=10$. For each $n$, we tested $100$ random instances. In Table \ref{tab:rank1-frequency-NNP}, we report the number of instances that produced rank-one solutions. We also report the average CPU time (in seconds) using CVX to solve the problems.

\begin{table}[htb]
\centering
\begin{tabular}{c|r|r}
\hline
 $n$ & $\rank$-1 & CPU \\\hline
 3& 100 & 0.21\\
 4& 100 & 0.56\\
 5& 100 & 1.31\\
 6& 100 & 6.16\\
 7& 100 & 47.84\\
 8& 100 & 166.61\\
 9& 100 & 703.82\\
\hline
\end{tabular}
\caption{Frequency of nuclear norm penalty problem~\eqref{prob:nuclear-penalty} having a rank-one solution}
\label{tab:rank1-frequency-NNP}
\end{table}

Table \ref{tab:rank1-frequency-NNP} shows that for these randomly created tensor PCA problems, the nuclear norm penalty problem \eqref{prob:nuclear-penalty} {\it always} gives a rank-one solution, and thus {\it always} solves the original problem \eqref{prob:tensor-pca} to optimality.



\section{Semidefinite Programming Relaxation}

In this section, we study another convex relaxation for Problem \eqref{prob:matrix-pca}. Note that the constraint
\[X\in\SI^{n^d\times n^d}, \rank(X)=1\]
in \eqref{prob:matrix-pca} actually implies that $X$ is positive semidefinite. To get a tractable convex problem, we drop the $\rank$ constraint and impose a semidefinite constraint to \eqref{prob:matrix-pca} and consider the following SDP relaxation:
\begin{equation}\label{prob:matrix-pca-SDR}
\begin{array}{lll} (SDR)&\max & \tr(F X) \\
                        &\st & \tr(X) =1,\\
                        && \matr^{-1}(X) \in \SI^{n^{2d}},\;X \succeq 0.
\end{array}
\end{equation}
Remark that replacing the rank-one constraint by SDP constraint is by now a common and standard practice; 
see, e.g., \cite{Alizadeh93interiorpoint,GoemansWilliamson1995,VandenbergheBoyd1996}.
Next theorem shows that the SDP relaxation \eqref{prob:matrix-pca-SDR} is actually closely related to the nuclear norm penalty problem \eqref{prob:nuclear-penalty}.

\begin{theorem}\label{thoerem:SDR-NuclearNormPenalty}
Let $X^*_{SDR}$ and $X^*_{PNP}(\rho)$ be the optimal solutions of problems~\eqref{prob:matrix-pca-SDR} and~\eqref{prob:nuclear-penalty} respectively. Suppose $Eig^+(X)$ and $Eig^-(X)$ are the summations of nonnegative eigenvalues and negative eigenvalues of $X$ respectively, i.e.,
\[Eig^+(X) := \sum_{i:\: \lambda_i(X)\geq 0} \lambda_i(X), \quad Eig^-(X) := \sum_{i:\: \lambda_i(X)< 0} \lambda_i(X).\]
It holds that
\[{2(\rho - v)}\,\left|Eig^-(X^*_{PNP}(\rho))\right| \le v - F_0,\]
where $F_0 := \max\limits_{1\le i \le n}\mathcal{F}_{i^{2d}}$ and $v$ is the optimal value of the following optimization problem
\begin{equation}\label{prob:NuclearNormConstraint}
\begin{array}{lll} &\max & \tr(F X) \\
                        &\st & \|X\|_* =1,\\
                        && X \in \SI^{n^d \times n^d}.
\end{array}
\end{equation}
As a result, $\lim\limits_{\rho \rightarrow + \infty}\tr(FX^*_{PNP}(\rho))=\tr(FX^*_{SDR})$.
\end{theorem}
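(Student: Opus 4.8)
The plan is to bound the negative spectrum of $X^*_{PNP}(\rho)$ by playing the optimality of the penalty problem against two reference quantities: a cheap rank-one feasible point that realizes $F_0$, and the homogeneous upper bound furnished by $v$. Write $X := X^*_{PNP}(\rho)$ and split it into its nonnegative and negative spectral parts, $X = X_+ - X_-$ with $X_+, X_- \succeq 0$ and $X_+ X_- = 0$. Since feasibility forces $\tr(X) = \tr(X_+) - \tr(X_-) = 1$, while $\|X\|_* = \tr(X_+) + \tr(X_-)$ and $|Eig^-(X)| = \tr(X_-)$, I obtain the clean identity $\|X\|_* = 1 + 2|Eig^-(X)|$. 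This is the algebraic heart of the estimate: under the trace constraint the nuclear norm exceeds $1$ by exactly twice the mass of the negative eigenvalues.

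First I would produce a lower bound on the penalty objective. Let $i^*$ attain $F_0 = \max_i \mathcal{F}_{i^{2d}}$ and set $\mathcal{X}_0 = \underbrace{e_{i^*}\otimes\cdots\otimes e_{i^*}}_{2d}$, whose matricization $X_0 = \matr(\mathcal{X}_0)$ is rank-one, positive semidefinite, super-symmetric, and satisfies $\tr(X_0) = \|X_0\|_* = 1$ and $\tr(FX_0) = \mathcal{F}\bullet\mathcal{X}_0 = F_0$. Thus $X_0$ is feasible for \eqref{prob:nuclear-penalty} with penalty value $F_0 - \rho$, and optimality of $X$ gives $\tr(FX) - \rho\|X\|_* \ge F_0 - \rho$; substituting the identity above rearranges this to $\tr(FX) \ge F_0 + 2\rho|Eig^-(X)|$. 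For the matching upper bound I would invoke \eqref{prob:NuclearNormConstraint}: scaling any symmetric $Y\neq 0$ to unit nuclear norm shows, by the definition of $v$, that $\tr(FY) \le v\|Y\|_*$ for every symmetric $Y$, so in particular $\tr(FX) \le v\|X\|_* = v(1 + 2|Eig^-(X)|)$. Chaining the two bounds yields $F_0 + 2\rho|Eig^-(X)| \le v + 2v|Eig^-(X)|$, which is precisely $2(\rho - v)|Eig^-(X)| \le v - F_0$.

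For the limiting statement, the inequality already gives $|Eig^-(X^*_{PNP}(\rho))| \le (v - F_0)/(2(\rho - v)) \to 0$ as $\rho \to \infty$ (note $v \ge F_0$ since $X_0$ is feasible for \eqref{prob:NuclearNormConstraint}). One direction of the limit is immediate: $X^*_{SDR}$ is PSD with unit trace, hence feasible for \eqref{prob:nuclear-penalty} with $\|X^*_{SDR}\|_* = 1$, so optimality of $X$ gives $\tr(FX^*_{PNP}(\rho)) \ge \tr(FX^*_{SDR})$ for every $\rho$. For the reverse I would argue by compactness. The vanishing negative part makes $\|X^*_{PNP}(\rho)\|_*$ uniformly bounded, so any sequence $\rho_k \to \infty$ admits a subsequence along which $X^*_{PNP}(\rho_k) \to \bar{X}$. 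The constraints $\tr(\cdot)=1$, super-symmetry of $\matr^{-1}(\cdot)$, and symmetry are closed, and since eigenvalues depend continuously on the matrix, $|Eig^-(\bar{X})| = \lim |Eig^-(X^*_{PNP}(\rho_k))| = 0$, forcing $\bar{X} \succeq 0$. Hence $\bar{X}$ is feasible for $(SDR)$ and $\lim \tr(FX^*_{PNP}(\rho_k)) = \tr(F\bar{X}) \le \tr(FX^*_{SDR})$; since this upper bound holds for every subsequential limit while $\tr(FX^*_{PNP}(\rho))\ge \tr(FX^*_{SDR})$ throughout, the full limit follows.

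The main obstacle, I expect, is not the algebra of the inequality — which drops out once one spots the identity $\|X\|_* = 1 + 2|Eig^-(X)|$ and the homogeneity bound through $v$ — but the limit: one must verify that the negative part vanishing is enough to push any subsequential limit into the feasible set of $(SDR)$, which hinges on closedness of the constraints together with continuity of the spectrum. I would take care to confirm $v \ge F_0$ so the derived bound is genuinely informative (right-hand side nonnegative), and that $v$ is attained and finite, guaranteeing the homogeneity bound $\tr(FY) \le v\|Y\|_*$ holds with a finite constant.
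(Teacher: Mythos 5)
Your proposal is correct and follows essentially the same route as the paper's proof: the identity $\|X\|_* = 1 + 2\left|Eig^-(X)\right|$ under the trace constraint, the rank-one feasible point $\matr(e^{i^*}\otimes\cdots\otimes e^{i^*})$ giving the lower bound $F_0-\rho$, the homogeneity bound $\tr(FX)\le v\|X\|_*$ via problem~\eqref{prob:NuclearNormConstraint}, and a boundedness/cluster-point argument for the limit. The only cosmetic difference is at the end, where the paper invokes monotonicity of $\tr(FX^*_{PNP}(\rho))$ in $\rho$ while you sandwich the limit using $\tr(FX^*_{PNP}(\rho))\ge\tr(FX^*_{SDR})$ together with every subsequential limit being SDR-feasible, which is an equally valid (and slightly more self-contained) way to close the same argument.
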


\begin{proof} Observe that $\matr(\underbrace{e^i\otimes \cdots \otimes e^i}_{2d})$, where $e^i$ is the $i$-th unit vector, 
is a feasible solution for problem~\eqref{prob:nuclear-penalty} with objective value $\mathcal{F}_{i^{2d}} - \rho$ for all $1 \le i \le n$. Moreover, by denoting $r(\rho) = \left|Eig^-(X^*_{PNP}(\rho))\right|$, we have
\begin{eqnarray*}
\|X^*_{PNP}(\rho)\|_* &=& Eig^+(X^*_{PNP}(\rho)) + \left|Eig^-(X^*_{PNP}(\rho))\right|\\
&=&\left( Eig^+(X^*_{PNP}(\rho)) + Eig^-(X^*_{PNP}(\rho)) \right)+ 2\left|Eig^-(X^*_{PNP}(\rho))\right|\\
&=&1+2r(\rho).
\end{eqnarray*}
Since $X^*_{PNP}(\rho)$ is optimal to problem~\eqref{prob:nuclear-penalty}, we have
\begin{equation}\label{comparing-objective}
\tr(FX^*_{PNP}(\rho)) - \rho\,(1+2r(\rho)) \ge \max\limits_{1\le i \le n}\mathcal{F}_{i^{2d}} - \rho \ge F_0 - \rho.
\end{equation}
Moreover, since $X^*_{PNP}(\rho)/\|X^*_{PNP}(\rho) \|_*$ is feasible to problem~\eqref{prob:NuclearNormConstraint}, we have
\be\label{comparing-objective-eq-before} \tr(FX^*_{PNP}(\rho)) \le \|X^*_{PNP}(\rho) \|_*\,v =(1+2r(\rho))\,v.\ee
Combining \eqref{comparing-objective-eq-before} and \eqref{comparing-objective} yields
\begin{equation}\label{r-rho}
{2(\rho - v)}\,r(\rho) \le v - F_0.
\end{equation} 
Notice that $\|X \|_* = 1$ implies $\|X\|_\infty$ is bounded for all feasible $X \in \SI^{n^d \times n^d}$, where $\|X\|_\infty$ denotes the largest entry of $X$ in magnitude. Thus the set $\{ X^*_{PNP}(\rho) \mid \rho > 0\}$ is bounded.
Let $X^*_{PNP}$ be one cluster point of sequence $\{ X^*_{PNP}(\rho) \mid \rho > 0 \}$. By taking the limit $\rho \rightarrow +\infty$ in~\eqref{r-rho}, we have $r(\rho)\rightarrow 0$ and thus $X^*_{PNP} \succeq 0$. Consequently, $X^*_{PNP}$ is a feasible solution to problem~\eqref{prob:matrix-pca-SDR} and $\tr(FX^*_{SDR}) \ge \tr(FX^*_{PNP})$. On the other hand, it is easy to check that for any $0 < \rho_1 < \rho_2$,
$$
\tr(FX^*_{SDR}) \le \tr(FX^*_{PNP}(\rho_2))  \le \tr(FX^*_{PNP}(\rho_1)),
$$
which implies $\tr(FX^*_{SDR}) \le \tr(FX^*_{PNP})$. Therefore, $\lim\limits_{\rho \rightarrow + \infty}\tr(FX^*_{PNP}(\rho)) = \tr(FX^*_{PNP})=\tr(FX^*_{SDR})$.
\end{proof}

Theorem \eqref{thoerem:SDR-NuclearNormPenalty} shows that when $\rho$ goes to infinity in \eqref{prob:nuclear-penalty}, the optimal solution of the nuclear norm penalty problem \eqref{prob:nuclear-penalty} converges to the optimal solution of the SDP relaxation \eqref{prob:matrix-pca-SDR}. As we have shown in Table \ref{tab:rank1-frequency-NNP} that the nuclear norm penalty problem \eqref{prob:nuclear-penalty} returns rank-one solutions for all the randomly created tensor PCA problems that we tested, it is expected that the SDP relaxation \eqref{prob:matrix-pca-SDR} will also {be likely to} give rank-one solutions. In fact, this is indeed the case as shown through the numerical results in Table \ref{tab:rank1-frequency-SDR}. As in Table \ref{tab:rank1-frequency-NNP}, we tested $100$ random instances for each $n$. In Table \ref{tab:rank1-frequency-SDR}, we report the number of instances that produced rank-one solutions for $d=2$. We also report the average CPU time (in seconds) using CVX to solve the problems. As we see from Table \ref{tab:rank1-frequency-SDR}, for these randomly created tensor PCA problems, the SDP relaxation \eqref{prob:matrix-pca-SDR} {\it always} gives a rank-one solution, and thus {\it always} solves the original problem \eqref{prob:tensor-pca} to optimality.

\begin{table}[htb]\small
\centering
\begin{tabular}{c|r|r}\hline
$n$ & $\rank$-1 & CPU \\\hline
 3& 100 & 0.14\\
 4& 100 & 0.25\\
 5& 100 & 0.55\\
 6& 100 & 1.16\\
 7& 100 & 2.37\\
 8& 100 & 4.82\\
 9& 100 & 8.89 \\\hline
\end{tabular}
\caption{Frequency of SDP relaxation \eqref{prob:matrix-pca-SDR} having a rank-one solution}
\label{tab:rank1-frequency-SDR}
\end{table}


\section{Alternating Direction Method of Multipliers}

The computational times reported in Tables \ref{tab:rank1-frequency-NNP} and \ref{tab:rank1-frequency-SDR} suggest that it can be time-consuming to solve the convex problems \eqref{prob:nuclear-penalty} and \eqref{prob:matrix-pca-SDR} when the problem size is large (especially for the nuclear norm penalty problem \eqref{prob:nuclear-penalty}). In this section, we propose an alternating direction method of multipliers (ADMM) for solving \eqref{prob:nuclear-penalty} and \eqref{prob:matrix-pca-SDR} that fully takes advantage of the structures. ADMM is closely related to some operator-splitting methods, known as Douglas-Rachford and Peaceman-Rachford methods, that were proposed in 1950s for solving variational problems arising from PDEs (see \cite{Douglas-Rachford-56,Peaceman-Rachford-55}). These operator-splitting methods were extensively studied later in the literature for finding the zeros of the sum of monotone operators and for solving convex optimization problems (see \cite{Lions-Mercier-79,Fortin-Glowinski-1983,Glowinski-LeTallec-89,Eckstein-thesis-89,Eckstein-Bertsekas-1992}). The ADMM we will study in this section was shown to be equivalent to the Douglas-Rachford operator-splitting method applied to convex optimization problem (see \cite{Gabay-83}). ADMM was revisited recently as it was found to be very efficient for many sparse and low-rank optimization problems arising from the recent emergence of compressed sensing \cite{Yang-Zhang-2009}, compressive imaging \cite{Wang-Yang-Yin-Zhang-2008,Goldstein-Osher-08}, robust PCA \cite{Tao-Yuan-SPCP-2011}, sparse inverse covariance selection \cite{Yuan-2009,Scheinberg-Ma-Goldfarb-NIPS-2010}, sparse PCA \cite{Ma-SPCA-2011-submit} and SDP \cite{Wen-Goldfarb-Yin-2009} etc. For a more complete discussion and list of references on ADMM, we refer to the recent survey paper by Boyd \etal \cite{Boyd-etal-ADM-survey-2011} and the references therein.

Generally speaking, ADMM solves the following convex optimization problem,
\be\label{prob:sum-2}\ba{ll} \min_{x\in\RR^n,y\in\RR^p} & f(x) + g(y) \\
                               \st      & Ax + By = b \\
                                        & x\in {\CCal}, \ y\in {\DCal}, \ea\ee
where $f$ and $g$ are convex functions, $A\in\RR^{m\times n}$, $B\in\RR^{m\times p}$, $b\in\RR^m$, $\CCal$ and $\DCal$ are some simple convex sets. A typical iteration of ADMM for solving \eqref{prob:sum-2} can be described as follows:
\be\label{alg:ADMM-sum-2}\left\{\ba{lll} x^{k+1} & := & \argmin_{x\in\CCal} \ \LCal_\mu(x,y^k;\lambda^k) \\
                                         y^{k+1} & := & \argmin_{y\in\DCal} \ \LCal_\mu(x^{k+1},y;\lambda^k) \\
                                         \lambda^{k+1} & := & \lambda^k - (Ax^{k+1}+By^{k+1}-b)/\mu, \ea\right. \ee
where the augmented Lagrangian function $\LCal_\mu(x,y;\lambda)$ is defined as
\[\LCal_\mu(x,y;\lambda) := f(x) + g(y) - \langle \lambda, Ax+By-b \rangle + \frac{1}{2\mu}\|Ax+By-b\|^2,\]
with $\lambda$ being the Lagrange multiplier and $\mu>0$ a penalty parameter.
The following theorem gives the global convergence of \eqref{alg:ADMM-sum-2} for solving \eqref{prob:sum-2}, and this has been well studied in the literature (see, e.g., \cite{Fortin-Glowinski-1983,Eckstein-thesis-89}).
\begin{theorem}\label{the:convergence-admm-generic}
Assume both A and B are of full column rank, the sequence $\{(x^k,y^k,\lambda^k)\}$ generated by \eqref{alg:ADMM-sum-2} globally converges to a pair of primal and dual optimal solutions $(x^*,y^*)$ and $\lambda^*$ of \eqref{prob:sum-2} from any starting point.
\end{theorem}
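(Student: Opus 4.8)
The plan is to prove convergence by the standard variational-inequality and Lyapunov-function argument for the augmented Lagrangian scheme \eqref{alg:ADMM-sum-2}, reserving the full-column-rank hypothesis for the final stage, where it turns convergence of residuals into convergence of the iterates themselves. An essentially equivalent route, which I will only indicate, is to invoke Gabay's identification \cite{Gabay-83} of \eqref{alg:ADMM-sum-2} with the Douglas--Rachford splitting applied to the dual of \eqref{prob:sum-2}, followed by the Lions--Mercier convergence theory \cite{Lions-Mercier-79} for zeros of a sum of two maximal monotone operators; the direct argument below is more self-contained and is the one underlying \cite{Fortin-Glowinski-1983,Eckstein-thesis-89}.

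First I would record the optimality system: a triple $(x^*,y^*,\lambda^*)$ is a pair of primal-dual optimal solutions of \eqref{prob:sum-2} if and only if $A^{\T}\lambda^* \in \partial f(x^*)$, $B^{\T}\lambda^* \in \partial g(y^*)$, and $Ax^*+By^*=b$. Next I would read off the first-order conditions of the two subproblems in \eqref{alg:ADMM-sum-2}. Minimizing $\LCal_\mu(\cdot,y^k;\lambda^k)$ yields $A^{\T}\hat\lambda^{k+1}\in\partial f(x^{k+1})$ for the intermediate multiplier $\hat\lambda^{k+1}:=\lambda^k-(Ax^{k+1}+By^k-b)/\mu$, while minimizing $\LCal_\mu(x^{k+1},\cdot;\lambda^k)$ and substituting the multiplier update yields $B^{\T}\lambda^{k+1}\in\partial g(y^{k+1})$. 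Observe also that both $A$ and $B$ having full column rank makes each subproblem objective strongly convex, so that $x^{k+1}$ and $y^{k+1}$ are uniquely determined and the iteration is a well-defined single-valued map.

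The heart of the proof is to combine these inclusions against the optimality system using the monotonicity of $\partial f$ and $\partial g$. Pairing $A^{\T}(\hat\lambda^{k+1}-\lambda^*)$ with $x^{k+1}-x^*$ and $B^{\T}(\lambda^{k+1}-\lambda^*)$ with $y^{k+1}-y^*$ produces inner-product inequalities of definite sign, which assemble into the decrease estimate
\[ V^{k+1}\le V^k-\frac{1}{\mu}\|Ax^{k+1}+By^{k+1}-b\|^2-\frac{1}{\mu}\|B(y^{k+1}-y^k)\|^2 \]
for the Lyapunov function $V^k:=\mu\|\lambda^k-\lambda^*\|^2+\frac{1}{\mu}\|B(y^k-y^*)\|^2$. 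Monotonicity of $V^k$ bounds $\{\lambda^k\}$ and $\{By^k\}$, and telescoping shows the primal residual $Ax^{k+1}+By^{k+1}-b$ and the difference $B(y^{k+1}-y^k)$ are square-summable, hence vanish in the limit. Consequently primal feasibility holds asymptotically, the objective values converge to the optimal value, and some cluster point $(x^\infty,y^\infty,\lambda^\infty)$ satisfies the optimality system.

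The last step is where the two full-column-rank assumptions become indispensable: the Lyapunov analysis controls only the images $By^k$ and the multipliers $\lambda^k$, not $x^k$ and $y^k$ separately. Because $B$ is injective, $B(y^{k+1}-y^k)\to 0$ forces $y^{k+1}-y^k\to 0$ and convergence of $\{By^k\}$ upgrades to convergence of $\{y^k\}$; because $A$ is injective, the vanishing of the residual together with the convergence of $\{By^k\}$ pins down $\{Ax^k\}$ and hence $\{x^k\}$. I expect this rank-dependent passage from convergence of the images to convergence of the pre-images---and the accompanying verification that the common limit is a genuine KKT point of \eqref{prob:sum-2} rather than merely a point of vanishing residual---to be the main obstacle, and the only place where full column rank of both $A$ and $B$ is actually used; everything preceding it remains valid for arbitrary convex $f$ and $g$.
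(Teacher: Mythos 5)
The paper never actually proves Theorem \ref{the:convergence-admm-generic}: it states the result as classical and points to \cite{Fortin-Glowinski-1983,Eckstein-thesis-89}, so there is no internal proof to compare against. What you have written is a faithful reconstruction of exactly the argument in those cited sources (and in the appendix of the Boyd et al.\ survey the paper also cites): optimality conditions of the two subproblems read as subgradient inclusions via the intermediate multiplier $\hat\lambda^{k+1}$, monotonicity of $\partial f$ and $\partial g$ assembled into the decrease of $V^k=\mu\|\lambda^k-\lambda^*\|^2+\frac{1}{\mu}\|B(y^k-y^*)\|^2$, square-summability of the primal residual and of $B(y^{k+1}-y^k)$, and the rank hypotheses used to pass from images to iterates; your scaling of the Lyapunov function and of the decrease estimate is the correct one for the paper's convention (penalty $\frac{1}{2\mu}\|\cdot\|^2$, multiplier update divided by $\mu$). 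Two points you gloss would need care in a full write-up. First, the subproblems in \eqref{alg:ADMM-sum-2} are constrained to $\CCal$ and $\DCal$, so either absorb the indicator functions of $\CCal$ and $\DCal$ into $f$ and $g$ (your closing remark about ``arbitrary convex $f$ and $g$'' suggests you intend this) or carry normal-cone terms through the monotonicity pairing; the inclusions $A^{\T}\hat\lambda^{k+1}\in\partial f(x^{k+1})$ and $B^{\T}\lambda^{k+1}\in\partial g(y^{k+1})$ are otherwise not literally what the subproblems deliver. Second, monotonicity of $V^k$ alone gives boundedness of $\{\lambda^k\}$ and $\{By^k\}$, not their convergence: the standard completion is to extract a cluster point $(x^\infty,y^\infty,\lambda^\infty)$, verify it satisfies the KKT system using closedness of the subdifferential graphs together with $\hat\lambda^{k+1}-\lambda^{k+1}=B(y^{k+1}-y^k)/\mu\to0$, and then re-center the Lyapunov function at this cluster point so that $V^k\to0$ along the whole sequence, which is what upgrades subsequential to global convergence. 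Neither point is a gap in the idea --- both are standard bookkeeping --- so your proposal is correct and is, in substance, the proof the paper delegates to the literature. It is also worth observing that in the only places the paper invokes the theorem, namely \eqref{alg:ADMM-nuclear-norm} and \eqref{alg:ADMM-SDR}, one has $A=I$ and $B=-I$, so the full-column-rank hypothesis you track so carefully is trivially satisfied there.
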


Because both the nuclear norm penalty problem \eqref{prob:nuclear-penalty} and SDP relaxation \eqref{prob:matrix-pca-SDR} can be rewritten in the form of \eqref{prob:sum-2}, we can apply ADMM to solve them.

\subsection{ADMM for Nuclear Norm Penalty Problem \eqref{prob:nuclear-penalty}}
Note that the nuclear norm penalty problem \eqref{prob:nuclear-penalty} can be rewritten equivalently as
\begin{equation}\label{prob:nuclear-penalty-XY}
\begin{array}{lll} \min &  - \tr(F Y) + \rho \| Y \|_* \\
                    \st &  X - Y = 0, \\
                        &  X \in {\CCal},
\end{array}
\end{equation}
where $\CCal:=\{X\in\SI^{n^d\times n^d}\mid \tr(X) =1, \ \matr^{-1}(X) \in \SI^{n^{2d}}\}$. A typical iteration of ADMM for solving \eqref{prob:nuclear-penalty-XY} can be described as
\be\label{alg:ADMM-nuclear-norm}\left\{\ba{lll} X^{k+1} & := & \argmin_{X \in \CCal} - \tr(F Y^k) + \rho \|Y^k\|_* - \langle \Lambda^k, X-Y^k \rangle + \frac{1}{2\mu}\|X-Y^k\|_F^2 \\ Y^{k+1} & := & \argmin \ -\tr(F Y) + \rho \|Y\|_* - \langle \Lambda^k, X^{k+1}-Y \rangle + \frac{1}{2\mu}\|X^{k+1}-Y\|_F^2 \\
\Lambda^{k+1} & := & \Lambda^k - (X^{k+1}-Y^{k+1})/\mu, \ea\right.\ee
where $\Lambda$ is the Lagrange multiplier associated with the equality constraint in \eqref{prob:nuclear-penalty-XY} and $\mu>0$ is a penalty parameter. Following Theorem \ref{the:convergence-admm-generic}, we know that the sequence $\{(X^k,Y^k,\Lambda^k)\}$ generated by \eqref{alg:ADMM-nuclear-norm} globally converges to a pair of primal and dual optimal solutions $(X^*,Y^*)$ and $\Lambda^*$ of \eqref{prob:nuclear-penalty-XY} from any starting point.

Next we show that the two subproblems in \eqref{alg:ADMM-nuclear-norm} are both easy to solve. The first subproblem in \eqref{alg:ADMM-nuclear-norm} can be equivalently written as
\be\label{alg:ADMM-nuclear-norm-sub-X} X^{k+1} := \argmin_{X\in\CCal} \half\|X-(Y^k+\mu\Lambda^k)\|_F^2, \ee
i.e., the solution of the first subproblem in \eqref{alg:ADMM-nuclear-norm} corresponds to the projection of $Y^k+\mu\Lambda^k$ onto convex set $\CCal$. We will elaborate how to compute this projection in Section \ref{sec:projection}.

The second subproblem in \eqref{alg:ADMM-nuclear-norm} can be reduced to:
\begin{equation}\label{alg:ADMM-subY-reduced} Y^{k+1} := \argmin_Y \quad\mu\rho\|Y\|_*+\half\|Y-(X^{k+1}-\mu(\Lambda^k-F))\|_F^2.\end{equation}
This problem is known to have a closed-form solution that is given by the following so-called matrix shrinkage operation (see, e.g., \cite{Ma-Goldfarb-Chen-2008}):
\begin{equation*} Y^{k+1} := U\Diag(\max\{\sigma-\mu\rho,0\})V^{\T},\end{equation*} where $U\Diag(\sigma)V^{\T}$ is the singular value decomposition of matrix
$X^{k+1}-\mu(\Lambda^k-F)$.

\subsection{The Projection}\label{sec:projection}
In this subsection, we study how to solve \eqref{alg:ADMM-nuclear-norm-sub-X}, i.e., how to compute the following projection for any given matrix $Z\in\SI^{n^d\times n^d}$:
\begin{equation}\label{projection-matrix-form}
\begin{array}{ll} \min & \|X - Z\|_{F}^2 \\ \st & \tr(X) =1,\\
                                & \matr^{-1}(X) \in \SI^{n^{2d}}.
\end{array}
\end{equation}
For the sake of discussion, in the following we consider the equivalent tensor representation of \eqref{projection-matrix-form}:
\begin{equation}\label{projection-tensor}
\begin{array}{ll} \min & \| \mathcal{X} - \mathcal{Z}\|_{F}^2 \\ \st & \sum\limits_{k \in \K(n,d)}\frac{d!}{\prod_{j=1}^{n}{k_j!}}\mathcal{X}_{1^{2k_1}2^{2k_2}\cdots n^{2k_n}} =1,\\
                                & \mathcal{X} \in \SI^{n^{2d}},
\end{array}
\end{equation}
where $\mathcal{X} = \matr^{-1}(X)$, $\mathcal{Z} = \matr^{-1}(Z)$, and the equality constraint is due to~\eqref{trace-equality}. Now we denote index set
$$
\mathbf{I} = \left\{ (i_1\cdots i_{2d}) \in \pi (1^{2k_1}\cdots n^{2k_n} ) \; \big{|}\; k=(k_1,\cdots,k_n) \in \K(n,d) \right\}.
$$
Then the first-order optimality conditions of Problem \eqref{projection-tensor} imply
\begin{equation*}
\left\{
\begin{array}{ll}
2 \left( |\pi(i_1\cdots i_{2d})|\,\mathcal{X}_{i_1\cdots i_{2d}} - \sum\limits_{j_1\cdots j_{2d} \in \pi(i_1\cdots i_{2d})}\mathcal{Z}_{j_1\cdots j_{2d}} \right) =0, &\mbox{if}\; (i_1\cdots i_{2d}) \not\in \mathbf{I}, \\
2 \left( \frac{(2d)!}{\prod_{j=1}^{n}{(2k_j)!}}\mathcal{X}_{1^{2k_1}\cdots n^{2k_n}} - \sum\limits_{j_1\cdots j_{2d} \in \pi(1^{2k_1}\cdots n^{2k_n})}\mathcal{Z}_{j_1\cdots j_{2d}} \right) - \lambda\, \frac{(d)!}{\prod_{j=1}^{n}{(k_j)!}} =0,  &\mbox{otherwise}.
\end{array}
\right.
\end{equation*}
Denote $\hat {\mathcal{Z}}$ to be the super-symmetric counterpart of tensor $\mathcal{Z}$, i.e.
$$
\hat {\mathcal{Z}}_{i_1\cdots i_{2d}} =  \sum\limits_{j_1\cdots j_{2d} \in \pi(i_1\cdots i_{2d})}\frac{\mathcal{Z}_{j_1\cdots j_{2d}}}{| \pi(i_1\cdots i_{2d})|}
$$
and $\alpha(k,d):= \big{(}\frac{(d)!}{\prod_{j=1}^{n}{(k_j)!}} \big{)}/ \big{(} \frac{(2d)!}{\prod_{j=1}^{n}{(2k_j)!}} \big{)}$. Then due to the first-order optimality conditions of \eqref{projection-tensor}, the optimal solution $\mathcal{X}^*$ of Problem \eqref{projection-tensor} satisfies
\begin{equation}\label{projection-summing-eq}
\left\{
\begin{array}{llll}
\mathcal{X}_{i_1\cdots i_{2d}}^* & = & \hat{\mathcal{Z}}_{i_1\cdots i_{2d}}, & \mbox{ if } (i_1\cdots i_{2d}) \not\in \mathbf{I}, \\
\mathcal{X}_{1^{2k_1}\cdots n^{2k_n}}^* & = & \frac{\lambda}{2}\, \alpha(k,d) + \hat{\mathcal{Z}}_{1^{2k_1}\cdots n^{2k_n}}, & \mbox{ otherwise }.
\end{array}
\right.
\end{equation}
Multiplying the second equality of \eqref{projection-summing-eq} by $\frac{(d)!}{\prod_{j=1}^{n}{(k_j)!}}$ and summing the resulting equality over all $k=(k_1,\cdots, k_n)$ yield
\[
\sum\limits_{k \in \K(n,d)}  \frac{(d)!}{\prod_{j=1}^{n}{(k_j)!}} \mathcal{X}_{1^{2k_1}\cdots n^{2k_n}}^* = \frac{\lambda}{2} \sum\limits_{k \in \K(n,d)} \frac{(d)!}{\prod_{j=1}^{n}{(k_j)!}}\alpha(k,d) + \sum\limits_{k \in \K(n,d)}  \frac{(d)!}{\prod_{j=1}^{n}{(k_j)!}}\hat{\mathcal{Z}}_{1^{2k_1}\cdots n^{2k_n}} .
\]
It remains to determine $\lambda$.
Noticing that $\mathcal{X}^*$ is a feasible solution for problem~\eqref{projection-tensor}, we have $\sum\limits_{k \in \K(n,d)}  \frac{(d)!}{\prod_{j=1}^{n}{(k_j)!}} \mathcal{X}_{1^{2k_1}\cdots n^{2k_n}}^*=1$. As a result,
\[
\lambda = 2\bigg{(}1 - \sum\limits_{k \in \K(n,d)}  \frac{(d)!}{\prod_{j=1}^{n}{(k_j)!}}\hat{\mathcal{Z}}_{1^{2k_1}\cdots n^{2k_n}} \bigg{)} \bigg{/} \sum\limits_{k \in \K(n,d)} \frac{(d)!}{\prod_{j=1}^{n}{(k_j)!}}\alpha(k,d),
\]
and thus we derived $\mathcal{X}^*$ and ${X}^* = \matr(\mathcal{X}^*)$ as the desired optimal solution for~\eqref{projection-matrix-form}.

\subsection{ADMM for SDP Relaxation \eqref{prob:matrix-pca-SDR}}
Note that the SDP relaxation problem \eqref{prob:matrix-pca-SDR} can be formulated as
\begin{equation}\label{prob:SDR-XY}
\begin{array}{ll} \min &  - \tr(F Y)\\
                    \st &  \tr(X) =1, \quad \matr^{-1}(X) \in \SI^{n^{2d}} \\
                        &  X - Y = 0, \quad Y \succeq 0.
\end{array}
\end{equation}
A typical iteration of ADMM for solving \eqref{prob:SDR-XY} is
\be\label{alg:ADMM-SDR}\left\{\ba{lll} X^{k+1} & := & \argmin_{X \in \CCal} -\tr(F Y^k) - \langle \Lambda^k, X-Y^k \rangle + \frac{1}{2\mu}\|X-Y^k\|_F^2 \\ Y^{k+1} & := & \argmin_{Y \succeq 0} - \tr(F Y) - \langle \Lambda^k, X^{k+1}-Y \rangle + \frac{1}{2\mu}\|X^{k+1}-Y\|_F^2 \\
\Lambda^{k+1} & := & \Lambda^k - (X^{k+1}-Y^{k+1})/\mu, \ea\right.\ee
where $\mu>0$ is a penalty parameter. Following Theorem \ref{the:convergence-admm-generic}, we know that the sequence $\{(X^k,Y^k,\Lambda^k)\}$ generated by \eqref{alg:ADMM-SDR} globally converges to a pair of primal and dual optimal solutions $(X^*,Y^*)$ and $\Lambda^*$ of \eqref{prob:SDR-XY} from any starting point.

It is easy to check that the two subproblems in \eqref{alg:ADMM-SDR} are both relatively easy to solve. Specifically, the solution of the first subproblem in \eqref{alg:ADMM-SDR} corresponds to the projection of $Y^k+\mu\Lambda^k$ onto $\CCal$. The solution of the second problem in \eqref{alg:ADMM-SDR} corresponds to the projection of $X^{k+1}+\mu F - \mu\Lambda^k$ onto the positive semidefinite cone $Y\succeq 0$, i.e.,
\begin{equation*}
Y^{k+1} := U\Diag(\max\{\sigma,0\})U^{\T},
\end{equation*}
where $U\Diag(\sigma)U^{\T}$ is the eigenvalue decomposition of matrix $X^{k+1}+\mu F - \mu\Lambda^k$.

\section{Numerical Results}

\subsection{The ADMM for Convex Programs \eqref{prob:nuclear-penalty} and \eqref{prob:matrix-pca-SDR}}

In this subsection, we report the results on using ADMM \eqref{alg:ADMM-nuclear-norm} to solve the nuclear norm penalty problem \eqref{prob:nuclear-penalty} and ADMM \eqref{alg:ADMM-SDR} to solve the SDP relaxation \eqref{prob:matrix-pca-SDR}. For the nuclear norm penalty problem \eqref{prob:nuclear-penalty}, we choose $\rho=10$.
For ADMM, we choose $\mu=0.5$ and we terminate the algorithms whenever
\[\frac{\|X^{k}-X^{k-1}\|_F}{\|X^{k-1}\|_F} + \|X^{k}-Y^{k}\|_F \leq 10^{-6}.\]
We shall compare ADMM and CVX for solving \eqref{prob:nuclear-penalty} and \eqref{prob:matrix-pca-SDR}, using the default solver of CVX -- SeDuMi version 1.21. We report in Table \ref{tab:cvx-admm-low-dim} the results on randomly created problems with $d=2$ and $n=6,7, 8, 9$. For each pair of $d$ and $n$, we test ten randomly created examples. In Table \ref{tab:cvx-admm-low-dim}, we use `Inst.' to denote the number of the instance and use `Iter.' to denote the number of iterations for ADMM to solve a random instance. We use
`Sol.Dif.' to denote the relative difference of the solutions obtained by ADMM and CVX, i.e., ${\rm Sol.Dif.}=\frac{\|X_{ADMM}-X_{CVX}\|_F}{\max\{1, \|X_{CVX}\|_F\}}$,
and we use `Val.Dif.' to denote the relative difference of the objective values obtained by ADMM and CVX, i.e.,  ${\rm Val.Dif.}=\frac{|v_{ADMM}-v_{CVX}|}{\max\{1, |v_{CVX}|\}}$. We use $T_{ADMM}$ and $T_{CVX}$ to denote the CPU times (in seconds) of ADMM and CVX, respectively. From Table \ref{tab:cvx-admm-low-dim} we see that, ADMM produced comparable solutions compared to CVX; however, ADMM were much faster than CVX, i.e., the interior point solver, especially for nuclear norm penalty problem \eqref{prob:nuclear-penalty}. Note that when $n=10$, ADMM was about 500 times faster than CVX for solving \eqref{prob:nuclear-penalty}, and was about 8 times faster for solving \eqref{prob:matrix-pca-SDR}.

In Table \ref{tab:cvx-admm-high-dim}, we report the results on larger problems, i.e., $n=14, 16, 18, 20$. Because it becomes time consuming to use CVX to solve the nuclear norm penalty problem \eqref{prob:nuclear-penalty} (our numerical tests showed that it took more than three hours to solve one instance of \eqref{prob:nuclear-penalty} for $n=11$ using CVX), we compare the solution quality and objective value of the solution generated by ADMM for solving \eqref{prob:nuclear-penalty} with solution generated by CVX for solving SDP problem \eqref{prob:matrix-pca-SDR}. From Table \ref{tab:cvx-admm-high-dim} we see that, the nuclear norm penalty problem \eqref{prob:nuclear-penalty} and the SDP problem \eqref{prob:matrix-pca-SDR} indeed produce the same solution as they are both close enough to the solution produced by CVX. We also see that using ADMM to solve \eqref{prob:nuclear-penalty} and \eqref{prob:matrix-pca-SDR} were much faster than using CVX to solve \eqref{prob:matrix-pca-SDR}.

\begin{table}[ht]{\footnotesize
\centering
{
\begin{tabular}{|c|c|c|c|c|c|c|c|c|c|c|}
\hline
Inst.  \#&\multicolumn{5}{|c|}{Nuclear Norm Penalty \eqref{prob:nuclear-penalty}} & \multicolumn{5}{|c|}{SDP \eqref{prob:matrix-pca-SDR}}\\\hline
&Sol.Dif.&Val.Dif.&$T_{ADMM}$&Iter. &$T_{CVX}$ & Sol.Dif.&Val.Dif.&$T_{ADMM}$&Iter. &$T_{CVX}$   \\\hline
\multicolumn{11}{|c|}{Dimension $n=6$}\\\hline
 1 & 1.77e-04 & 3.28e-06 & 1.16&  464 & 18.50 & 1.01e-04 & 2.83e-06 & 0.50 &  367 & 1.98\\
 2 & 1.25e-04 & 3.94e-07 & 0.71&  453 & 13.43 & 4.99e-05 & 3.78e-06 & 0.38 &  355 & 1.68\\
 3 & 1.56e-04 & 2.36e-07 & 0.89&  478 & 12.20 & 4.59e-05 & 3.51e-06 & 0.39 &  370 & 1.33\\
 4 & 3.90e-05 & 6.91e-07 & 0.59&  475 & 14.10 & 8.00e-05 & 9.57e-07 & 0.44 &  364 & 2.63\\
 5 & 1.49e-04 & 3.69e-06 & 0.58&  459 & 15.08 & 4.74e-05 & 3.18e-06 & 0.60 &  355 & 1.98\\
 6 & 8.46e-05 & 3.92e-06 & 1.07&  463 & 13.23 & 1.02e-04 & 2.68e-07 & 0.76 &  362 & 1.46\\
 7 & 5.59e-05 & 4.12e-06 & 0.86&  465 & 12.62 & 4.91e-05 & 4.75e-06 & 0.37 &  344 & 1.54\\
 8 & 5.24e-05 & 3.95e-06 & 0.61&  462 & 14.07 & 1.63e-05 & 2.97e-06 & 0.55 &  368 & 1.90\\
 9 & 9.30e-05 & 3.05e-06 & 0.85&  471 & 11.41 & 1.05e-04 & 2.90e-06 & 0.39 &  380 & 1.39\\
10 & 1.36e-04 & 3.89e-08 & 0.56&  465 & 11.04 & 3.38e-05 & 3.11e-06 & 0.30 &  319 & 1.69 \\\hline
\multicolumn{11}{|c|}{Dimension $n=7$}\\\hline
 1 & 1.59e-04 & 4.62e-07 & 1.23&  600 & 65.73 & 1.14e-04 & 4.09e-06 & 0.82 &  453 & 2.60\\
 2 & 9.11e-05 & 3.93e-07 & 1.02&  593 & 68.65 & 8.24e-05 & 2.87e-09 & 0.79 &  474 & 2.51\\
 3 & 2.61e-04 & 4.19e-06 & 1.07&  609 & 66.08 & 6.83e-05 & 4.01e-06 & 0.78 &  480 & 2.53\\
 4 & 1.12e-04 & 4.44e-06 & 1.07&  590 & 65.21 & 6.02e-05 & 3.88e-06 & 0.86 &  480 & 2.50\\
 5 & 1.22e-04 & 4.34e-06 & 1.10&  614 & 57.40 & 9.15e-05 & 4.15e-07 & 0.81 &  487 & 2.57\\
 6 & 1.44e-04 & 8.81e-08 & 1.06&  599 & 60.89 & 4.51e-05 & 4.46e-06 & 0.77 &  466 & 2.44\\
 7 & 1.93e-04 & 3.81e-06 & 1.08&  590 & 66.09 & 1.19e-04 & 2.82e-07 & 0.62 &  389 & 2.54\\
 8 & 1.53e-04 & 4.59e-06 & 1.09&  594 & 59.98 & 2.76e-05 & 3.73e-06 & 0.75 &  463 & 2.61\\
 9 & 1.41e-04 & 4.29e-08 & 1.06&  616 & 78.20 & 3.29e-04 & 4.21e-06 & 0.69 &  443 & 2.57\\
10 & 1.51e-04 & 3.94e-06 & 0.83&  501 & 75.58 & 1.23e-04 & 3.52e-06 & 0.78 &  454 & 2.63 \\\hline
\multicolumn{11}{|c|}{Dimension $n=8$}\\\hline
 1 & 2.86e-04 & 5.10e-06 & 2.15&  728 & 342.25 & 1.12e-04 & 4.52e-06 & 1.59 &  592 & 5.34\\
 2 & 2.76e-04 & 3.95e-07 & 2.07&  739 & 303.75 & 8.17e-05 & 4.78e-06 & 1.81 &  591 & 5.02\\
 3 & 9.29e-05 & 4.78e-06 & 7.74& 2864 & 333.46 & 2.57e-05 & 5.00e-06 & 7.20 & 2746 & 4.75\\
 4 & 3.21e-04 & 4.65e-06 & 2.01&  715 & 337.57 & 9.86e-05 & 4.01e-06 & 1.47 &  512 & 5.00\\
 5 & 1.26e-04 & 7.05e-07 & 1.92&  746 & 335.63 & 7.41e-05 & 4.36e-06 & 1.68 &  607 & 4.92\\
 6 & 1.32e-04 & 1.63e-07 & 2.12&  745 & 336.35 & 7.80e-05 & 5.00e-06 & 1.44 &  550 & 5.29\\
 7 & 3.49e-04 & 7.19e-07 & 2.00&  739 & 309.76 & 6.33e-05 & 4.55e-07 & 1.54 &  582 & 5.03\\
 8 & 4.55e-05 & 4.72e-07 & 2.13&  744 & 316.74 & 3.59e-05 & 7.27e-07 & 1.59 &  600 & 5.02\\
 9 & 5.60e-04 & 4.99e-06 & 2.06&  759 & 336.10 & 4.19e-05 & 4.97e-06 & 1.46 &  569 & 6.00\\
10 & 2.65e-04 & 1.36e-07 & 2.46&  746 & 382.20 & 8.00e-05 & 4.14e-06 & 1.86 &  606 & 5.98\\\hline
\multicolumn{11}{|c|}{Dimension $n=9$}\\\hline
 1 & 1.41e-04 & 1.35e-07 & 4.35&  910 & 1370.60 & 7.29e-05 & 4.78e-06 & 3.26 &  715 & 12.61\\
 2 & 1.83e-04 & 5.77e-06 & 3.60&  872 & 1405.46 & 1.77e-04 & 4.72e-06 & 2.86 &  732 & 9.63\\
 3 & 4.00e-04 & 4.85e-06 & 3.24&  807 & 1709.30 & 3.12e-04 & 8.28e-07 & 2.73 &  702 & 9.99\\
 4 & 3.34e-04 & 1.36e-07 & 3.06&  747 & 1445.57 & 6.13e-05 & 3.19e-07 & 2.91 &  707 & 10.19\\
 5 & 2.63e-04 & 5.43e-06 & 3.62&  904 & 1307.60 & 2.34e-05 & 4.68e-06 & 2.82 &  729 & 10.20\\
 6 & 8.01e-05 & 9.01e-08 & 3.78&  906 & 1353.45 & 9.33e-05 & 5.37e-06 & 2.49 &  597 & 9.31\\
 7 & 2.30e-04 & 5.16e-06 & 3.77&  900 & 1434.71 & 8.14e-05 & 5.68e-06 & 2.75 &  676 & 9.52\\
 8 & 3.27e-04 & 5.45e-06 & 3.71&  908 & 1314.14 & 1.98e-05 & 5.10e-06 & 2.91 &  730 & 9.98\\
 9 & 9.53e-05 & 5.56e-06 & 3.66&  888 & 1575.16 & 1.69e-04 & 4.82e-06 & 2.85 &  714 & 9.64\\
10 & 2.73e-04 & 2.16e-07 & 4.50& 1136 & 1628.80 & 2.73e-05 & 4.98e-06 & 3.39 &  882 & 9.90 \\\hline
\end{tabular}
\caption{Comparison of CVX and ADMM for small-scale problems}
\label{tab:cvx-admm-low-dim}}
}
\end{table}

\begin{table}[ht]{\footnotesize
\centering
{
\begin{tabular}{|c|c|c|c|c|c|c|c|c|c|}
\hline
Inst.  \#&\multicolumn{4}{|c|}{NNP}&\multicolumn{5}{|c|}{SDP}\\
\hline
 &Sol.Dif.$_{DS}$&Val.Dif.$_{DS}$&$T_{ADMM}$ &Iter.& Sol.Dif.&Val.Dif.&$T_{ADMM}$ &Iter. &$T_{CVX}$  \\\hline
\multicolumn{10}{|c|}{Dimension $n=14$}\\\hline
 1 & 4.61e-04 & 8.41e-06 & 36.85 & 1913 & 4.61e-04 & 8.35e-06 & 37.00 & 1621 & 158.21\\
 2 & 4.02e-04 & 2.94e-07 & 39.52 & 1897 & 4.02e-04 & 7.93e-06 & 39.65 & 1639 & 167.89\\
 3 & 1.62e-04 & 2.68e-08 & 37.21 & 1880 & 1.62e-04 & 8.23e-06 & 34.36 & 1408 & 213.04\\
 4 & 4.92e-04 & 7.74e-06 & 45.15 & 1918 & 4.92e-04 & 4.70e-07 & 59.84 & 1662 & 202.95\\
 5 & 8.56e-04 & 8.15e-06 & 34.93 & 1674 & 8.56e-04 & 8.14e-06 & 38.15 & 1588 & 194.01\\
 6 & 3.99e-05 & 4.05e-07 & 34.41 & 1852 & 4.08e-05 & 7.48e-06 & 32.28 & 1411 & 186.99\\
 7 & 7.98e-05 & 7.90e-06 & 38.11 & 1839 & 7.94e-05 & 3.76e-08 & 40.81 & 1555 & 191.76\\
 8 & 1.50e-04 & 8.10e-06 & 38.29 & 1990 & 1.50e-04 & 8.30e-06 & 34.10 & 1543 & 164.13\\
 9 & 1.35e-04 & 8.54e-06 & 34.58 & 1874 & 1.35e-04 & 2.62e-07 & 30.33 & 1387 & 171.77\\
10 & 5.50e-04 & 8.59e-06 & 37.28 & 1825 & 5.50e-04 & 7.71e-06 & 35.85 & 1567 & 169.51\\\hline
\multicolumn{10}{|c|}{Dimension $n=16$}\\\hline
 1 & 5.22e-05 & 9.00e-06 & 125.24 & 2359 & 5.21e-05 & 9.45e-06 & 102.85 & 2035 & 582.19\\
 2 & 1.02e-04 & 3.37e-07 & 92.37 & 2244 & 1.02e-04 & 9.11e-06 & 63.02 & 1427 & 606.70\\
 3 & 2.02e-05 & 5.97e-07 & 96.21 & 2474 & 2.01e-05 & 4.40e-07 & 83.92 & 1910 & 566.92\\
 4 & 8.53e-05 & 9.27e-06 & 90.83 & 2323 & 8.54e-05 & 9.59e-06 & 93.44 & 2048 & 560.54\\
 5 & 2.14e-04 & 9.19e-06 & 86.22 & 2359 & 2.14e-04 & 2.19e-07 & 80.06 & 1961 & 523.15\\
 6 & 3.12e-04 & 9.29e-06 & 88.82 & 2304 & 3.12e-04 & 8.58e-06 & 88.31 & 2042 & 498.55\\
 7 & 9.69e-05 & 9.12e-06 & 88.29 & 2431 & 9.65e-05 & 2.86e-07 & 88.05 & 2067 & 520.82\\
 8 & 3.34e-04 & 1.00e-05 & 85.32 & 2271 & 3.34e-04 & 8.53e-06 & 85.04 & 2043 & 515.85\\
 9 & 2.61e-04 & 9.01e-06 & 93.13 & 2475 & 2.61e-04 & 9.12e-06 & 88.85 & 2034 & 505.71\\
10 & 2.06e-04 & 3.45e-07 & 103.92 & 2813 & 2.05e-04 & 1.01e-05 & 94.41 & 2269 & 527.50  \\\hline
\multicolumn{10}{|c|}{Dimension $n=18$}\\\hline
 1 & 2.70e-04 & 1.01e-05 & 172.97 & 2733 & 2.70e-04 & 1.87e-07 & 168.91 & 2323 & 1737.94\\
 2 & 8.17e-04 & 1.11e-05 & 184.70 & 2970 & 8.17e-04 & 1.99e-07 & 168.83 & 2365 & 1549.10\\
 3 & 1.07e-04 & 3.22e-08 & 183.72 & 2920 & 1.07e-04 & 1.14e-05 & 169.64 & 2456 & 1640.04\\
 4 & 5.16e-04 & 1.01e-05 & 182.40 & 2958 & 5.16e-04 & 1.02e-05 & 174.72 & 2442 & 1636.86\\
 5 & 9.48e-04 & 1.03e-05 & 184.69 & 3039 & 9.48e-04 & 1.04e-05 & 170.68 & 2441 & 1543.41\\
 6 & 1.67e-04 & 1.03e-05 & 171.71 & 2845 & 1.67e-04 & 9.96e-06 & 182.37 & 2553 & 1633.55\\
 7 & 4.87e-05 & 3.77e-07 & 180.64 & 2883 & 4.87e-05 & 2.79e-07 & 187.56 & 2545 & 1638.38\\
 8 & 8.28e-05 & 1.07e-05 & 178.35 & 2904 & 8.28e-05 & 1.04e-05 & 181.57 & 2542 & 1641.56\\
 9 & 2.45e-04 & 1.06e-07 & 174.82 & 2902 & 2.45e-04 & 9.97e-06 & 152.58 & 2127 & 1735.26\\
10 & 9.58e-05 & 7.61e-07 & 191.06 & 2872 & 9.66e-05 & 1.11e-05 & 183.29 & 2480 & 1642.33  \\\hline
\multicolumn{10}{|c|}{Dimension $n=20$}\\\hline
 1 & 1.23e-03 & 6.98e-08 & 414.62 & 3415 &1.23e-03 & 4.21e-08 & 388.36 & 2810 &6116.02\\
 2 & 7.93e-04 & 1.24e-05 & 401.54 & 3383 &7.93e-04 & 1.14e-05 & 347.27 & 2689 &6182.56\\
 3 & 3.11e-04 & 1.21e-05 & 426.91 & 3498 &3.11e-04 & 1.21e-05 & 399.92 & 2845&6808.99\\
 4 & 7.16e-05 & 6.99e-07 & 397.69 & 3312 &7.40e-05 & 1.18e-05 & 366.82 & 2758 &7701.91\\
 5 & 6.24e-04 & 1.19e-05 & 435.05 & 3564 & 6.25e-04 & 1.20e-05 & 419.23 & 2903 & 7419.43\\
 6& 1.09e-04 & 1.20e-05 & 393.25 & 3376 & 1.09e-04 & 1.15e-05 & 397.43 & 2869 & 8622.19 \\
7 & 4.58e-04 & 3.21e-05 & 429.38 & 3536 & 4.58e-04 & 3.20e-05 & 422.72 & 2938 & 9211.37\\
8 & 6.15e-04 & 1.11e-05 & 273.33 & 2330 & 6.15e-04 & 7.14e-07 & 205.49 & 1511 & 5166.66\\
9 & 4.92e-04 & 1.16e-05 & 344.99 & 3017 & 4.92e-04 & 2.32e-07 & 259.18 & 1896 & 5063.00 \\
10 & 3.45e-004 & 2.56e-004 & 395.63 & 3357 & 1.14e-005  &  4.36e-007&  359.13 & 2713 & 6559.39 \\\hline
\end{tabular}
\caption{Comparison of CVX and ADMM for large-scale problems}
\label{tab:cvx-admm-high-dim}}
}
\end{table}

\subsection{Comparison with SOS { and MBI}}

Based on the results of the above tests, we may conclude that it is most efficient to solve the SDP relaxation by ADMM. In this subsection, we compare this approach with 
{two competing methods: one is based on the Sum of Squares (SOS) approach (Lasserre \cite{L01,L01_2} and Parrilo \cite{P00,P03}), and the other one is the Maximum Block Improvement (MBI) method proposed by Chen {\it et al.}~\cite{CHLZ11}.

Theoretically speaking, the SOS can solve any
general polynomial problems to any given accuracy, but it }requires to solve a sequence of (possibly large) semidefinite programs, which limits the applicability of the method to solve large size problems.
Henrion et al.~\cite{HLL09}
developed a specialized Matlab toolbox known as GloptiPoly~3 based on SOS approach, which will be used in our tests. {The MBI is tailored for multi-block optimization problem, and the polynomial optimization can be treated as multi-block problems, to which MBI can be applied. As we mentioned before, MBI aims to finding a stationary point, which may or may not be globally optimal.}

In Table~\ref{tab:SDP-MBI-GLP} we report the results using ADMM to solve SDP relaxation of PCA problem and compare them with the results of applying the SOS method {as well as the MBI method} for the same problem. {When using the MBI, as suggested in~\cite{CHLZ11}, we actually work on an equivalent problem of~\eqref{prob:tensor-pca}:  $\max\limits_{\| x \| = 1} \mathcal{F}(\underbrace{x,\cdots,x}_{2d})+ 6(x^{\top}x)^{d}$, where the equivalence is due to the constraint $\| x \| = 1$. This transformation can help the MBI avoid getting trapped in a local minimum.}

We use `Val.' to denote the objective value of the solution, `Status' to denote optimal status of GloptiPoly $3$, i.e., ${\rm Status}=1$ means GloptiPoly~$3$ successfully identified the optimality of current solution, `Sol.R.' to denote the solution rank returned by SDP relaxation and thanks to the previous discussion `Sol.R.=1' means the current solution is already optimal. From Table~\ref{tab:SDP-MBI-GLP}, {we see that the MBI is the fastest among all the methods but usually cannot guarantee global optimality, while GloptiPoly~$3$ is very time consuming but can globally solve most instances.} Note that when $n=20$, our ADMM was about 30 times faster than GloptiPoly~$3$. Moreover, for some instances GloptiPoly $3$ cannot identify the optimality even though the current solution is actually already optimal (see {instance $9$ with $n=16$ and instance $3$ with $n=18$}).

\begin{table}[ht]{\footnotesize
\centering
\begin{tabular}{|c|c|c|c|c|c|c|c|c|}\hline
Inst.  \#&\multicolumn{2}{|c|}{MBI} &\multicolumn{3}{|c|}{GLP}&\multicolumn{3}{|c|}{SDP by ADMM}\\\hline
& {Val.} & {Time} & Val. & Time &Status & Val.& Time & Sol.R. \\\hline
\multicolumn{9}{|c|}{Dimension $n=14$}\\\hline
 1 & { 5.17 } & { 0.23} & 5.28 & 143.14 & 1 & 5.28 & 14.29 & 1\\
 2 & { 5.04 } & { 0.22} & 5.65 & 109.65 & 1 & 5.65 & 32.64 & 1\\
 3 & { 5.08 } & { 0.13} & 5.80 & 119.48 & 1 & 5.80 & 34.30 & 1\\
 4 & { 5.94 } & { 0.16} & 5.95 & 100.39 & 1 & 5.95 & 30.64 & 1\\
 5 & { 4.74 } & { 0.48} & 5.88 & 122.19 & 1 & 5.88 & 33.13 & 1\\
 6 & { 5.68 } & { 0.54} & 6.38 & 122.44 & 1 & 6.38 & 33.30 & 1\\
 7 & { 4.61 } & { 0.12} & 5.91 & 104.68 & 1 & 5.91 & 30.17 & 1\\
 8 & { 5.68 } & { 0.23} & 6.31 & 141.52 & 1 & 6.31 & 41.73 & 1\\
 9 & { 5.93 } & { 0.22} & 6.40 & 102.73 & 1 & 6.40 & 37.32 & 1\\
10 & { 5.09 } & { 0.36} & 6.03 & 114.35 & 1 & 6.03 & 35.68 & 1 \\\hline
\multicolumn{9}{|c|}{Dimension $n=16$}\\\hline
 1 & { 6.52 } & { 0.45} & 6.74 & 420.10 & 1 & 6.74 & 91.80 & 1\\
 2 & { 5.51 } & { 1.21} & 5.93 & 428.10 & 1 & 5.93 & 83.90 & 1\\
 3 & { 5.02 } & { 0.30} & 6.44 & 393.16 & 1 & 6.44 & 90.16 & 1\\
 4 & { 5.60 } & { 0.32} & 6.48 & 424.07 & 1 & 6.48 & 90.67 & 1\\
 5 & { 5.78 } & { 0.36} & 6.53 & 431.44 & 1 & 6.53 & 95.48 & 1\\
 6 & { 5.23 } & { 0.26} & 6.42 & 437.58 & 1 & 6.42 & 98.19 & 1\\
 7 & { 6.11 } & { 0.24} & 6.23 & 406.16 & 1 & 6.23 & 89.21 & 1\\
 8 & { 5.92 } & { 0.51} & 6.39 & 416.58 & 1 & 6.39 & 89.75 & 1\\
 9 & { 5.47 } & { 0.28} & 6.00 & 457.29 & 0 & 6.00 & 77.56 & 1\\
10 & { 4.95 } & { 0.35} & 6.32 & 367.26 & 1 & 6.32 & 80.38 & 1 \\\hline
\multicolumn{9}{|c|}{Dimension $n=18$}\\\hline
 1 & { 6.16 } & { 0.57} & 7.38 & 1558.00 & 1 & 7.38 & 199.44 & 1\\
 2 & { 5.94 } & { 0.25} & 6.65 & 1388.45 & 1 & 6.65 & 190.52 & 1\\
 3 & { 7.42 } & { 0.22} & 7.42 & 1500.05 & 0 & 7.42 & 193.27 & 1\\
 4 & { 5.85 } & { 0.94} & 7.21 & 1481.34 & 1 & 7.21 & 195.02 & 1\\
 5 & { 7.35 } & { 0.43} & 7.35 & 1596.00 & 1 & 7.35 & 117.44 & 1\\
 6 & { 5.91 } & { 1.05} & 6.79 & 1300.82 & 1 & 6.78 & 193.36 & 1\\
 7 & { 5.80 } & { 0.85} & 6.84 & 1433.50 & 1 & 6.84 & 182.58 & 1\\
 8 & { 5.72 } & { 0.54} & 6.96 & 1648.63 & 1 & 6.96 & 231.88 & 1\\
 9 & { 6.15 } & { 0.17} & 7.07 & 1453.82 & 1 & 7.07 & 212.50 & 1\\
10 & { 6.01 } & { 1.11} & 6.89 & 1432.06 & 1 & 6.89 & 199.26 & 1 \\\hline
\multicolumn{9}{|c|}{Dimension $n=20$}\\\hline
 1 & { 5.95 } & { 0.39} & 7.40 & 8981.97 & 1 & 7.40 & 429.64 & 1\\
 2 & { 6.13 } & { 2.14} & 6.93 & 9339.06 & 1 & 6.93 & 355.25 & 1 \\
 3 & { 6.37 } & { 2.49} & 6.68 & 9629.04 & 1 & 6.68 & 418.11 & 1\\
 4 & { 6.23 } & { 1.14} & 6.87 & 10148.21 & 1 & 6.87 & 404.18 & 1 \\
  5 & { 6.62 } & { 1.66} & 7.72 & 11079.94 & 1 & 7.72 & 326.44 & 1\\
 6 & { 6.81 } & { 1.26} & 7.46 & 10609.65 & 1 & 7.46 & 415.69 & 1 \\
 7 & { 7.80 } & { 1.02} & 7.80 & 9723.37 & 1 & 7.80 & 430.76 & 1 \\
8 & { 6.03 } & { 0.95} & 7.02 & 12755.35 & 1 & 7.02 & 416.00 & 1 \\
9& { 7.80 } & { 0.61} & 7.80 &  12353.47 & 1 & 7.80 & 430.45 & 1\\
 10 & { 7.47 } & { 0.89} & 7.47 &  11629.12 & 1 & 7.47 & 375.52 & 1 \\ \hline

\end{tabular}
\caption{{Comparison SDP Relaxation by ADMM with GloptiPoly~3 and MBI.}}
\label{tab:SDP-MBI-GLP}}
\end{table}
{
\subsection{Comparison with Z-Eigenvalue Methods}
In~\cite{QWW09}, Qi {\it et al.} proposed two heuristic methods to find the maximum Z-eigenvalue of the third order super-symmetric tensors.
We will show later in Section~\ref{tri-linear} that our method can solve tri-linear (not necessary super-symmetric) tensor PCA problems. Thus in this subsection, we report the results of using ADMM to solve SDP relaxation of the third order tensor PCA problems and compare them with the results of applying
the two Z-eigenvalue methods, which are referred as ``Z1'' and ``Z2'' in Tables~\ref{tab:rank1-frequency-eigenvalue-method-normal} and~\ref{tab:rank1-frequency-eigenvalue-method-uniform}.
In Table~\ref{tab:rank1-frequency-eigenvalue-method-normal} we generate 1000 tensors by normal distribution, while in~Table~\ref{tab:rank1-frequency-eigenvalue-method-uniform}, the 1000 instances are generated by uniform distribution in the interval $(-1,1)$. We report, in Tables~\ref{tab:rank1-frequency-eigenvalue-method-normal} and~\ref{tab:rank1-frequency-eigenvalue-method-uniform}, the number of instances that are globally solved by each algorithm. According to these experiments, we can see that the performance of our approach is better than that of Z1 and is comparable to Z2.
}

\begin{table}[htb]\small
\centering
{
\begin{tabular}{c|r|r|r}\hline
$n$ & SDP & Z1& Z2 \\\hline
4& 999 & 979 & 995\\
5& 999 & 964 & 993\\
 6&  1000 & 947 & 998\\
 7&  1000 & 941 & 997\\
8 & 1000 & 938 & 999\\
9 & 1000 & 911 & 997 \\
 10 & 1000 & 906 & 1000 \\
 \hline
\end{tabular}
\caption{Comparison with Z-eigenvalue methods
with data generated by normal distribution}
\label{tab:rank1-frequency-eigenvalue-method-normal}
}
\end{table}

\begin{table}[htb]\small
\centering
{
\begin{tabular}{c|r|r|r}\hline
$n$& SDP & Z1& Z2 \\\hline
4 &  999 & 986 & 995 \\
 5 &  1000 & 966 & 997 \\
  6 &  999 & 945 & 997\\
  7 & 999 & 934 & 1000 \\
  8 &  1000 & 902 & 999 \\
   9 &  1000 & 910 & 999 \\
   10 & 1000 & 898 & 1000 \\
 \hline
\end{tabular}
\caption{Comparison with Z-eigenvalue methods with data generated by uninform distribution}
\label{tab:rank1-frequency-eigenvalue-method-uniform}
}
\end{table}

{
\section{What If the Solution Is Not Rank-One?}

Although our numerical results strongly indicate that problems \eqref{prob:nuclear-penalty} and \eqref{prob:matrix-pca-SDR} {are very likely to} admit rank-one solutions (100\% for the randomly created problems we tested), it is in principle possible that the solution $X^*= \sum\limits_{i=1}^{r}a^i(a^i)^{\top}$ is not of rank one, i.e., $r > 1$. In this situation, we can introduce a small perturbation to the original tensor $\mathcal{F}$ and apply the proposed algorithms. If the newly obtained solution is rank-one, we can say that this solution is a good approximation of the ``true'' solution. Another way to proceed is to apply a post-processing procedure, which will be discussed below, to $X^*$ to obtain a rank-one solution.

We denote $\mathcal{X}^*=\matr^{-1}(X^*)$, and $\mathcal{X}^*=\sum\limits_{i=1}^{r}\mathcal{A}^i\otimes \mathcal{A}^i $, where $\vect(\mathcal{A}^i)=a^{i}$. Basically, we
want to find the projection of $\mathcal{X}^*$ onto the rank-one tensor set $\{ \mathcal{X} \in \SI^{2d} \; \big{|}\; \rank(\mathcal{X})=1, \|\mathcal{X} \|_F=1 \}$:
\[\min\limits_{\mathcal{X} \in \SI^{2d}, \|\mathcal{X}\|_F=1, \rank(\mathcal{X})=1}\| \mathcal{X}^*-\mathcal{X}\|_{F}, \]
which is equivalent to
\begin{equation}\label{Prob:rank-one-projection}
\max\limits_{\|x^1\|=\cdots=\|x^{2d}\|=1}\mathcal{X}^*\left(x^1,x^2,\cdots,x^{2d}\right).
\end{equation}
This is a problem in the form of~\eqref{prob:tensor-pca}, but the difference is that $\mathcal{X}^*$ has a further structure which plays an important role in the later discussion.
\begin{proposition}\label{tensor-PSD}
For a tensor $\mathcal{F} =\sum\limits_{i=1}^{r}\mathcal{A}^i\otimes \mathcal{A}^i \in \SI^{n^{2d}}$, it holds that
\begin{equation}\label{co-quadratic-PSD}
\mathcal{F}\left(x^1,x^1,x^2,x^2,\cdots,x^d,x^d\right) \ge 0, \quad \forall\;x^1,x^2,\cdots,x^d.
\end{equation}
\end{proposition}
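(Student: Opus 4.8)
The plan is to exploit the super-symmetry of $\mathcal{F}$ to reorder the arguments of the multilinear form, so that the sum-of-outer-products structure collapses into a sum of squares.

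First I would recall what super-symmetry of $\mathcal{F}$ buys us at the level of the form: for any permutation $\pi$ of $\{1,\ldots,2d\}$ and any vectors $y^1,\ldots,y^{2d}$, we have $\mathcal{F}(y^1,\ldots,y^{2d})=\mathcal{F}(y^{\pi(1)},\ldots,y^{\pi(2d)})$, since relabelling the contracted indices does not change the value when the coefficients $\mathcal{F}_{i_1\cdots i_{2d}}$ are permutation-invariant. In particular, the interleaved list $(x^1,x^1,x^2,x^2,\ldots,x^d,x^d)$ and the blocked list $(x^1,\ldots,x^d,x^1,\ldots,x^d)$ are the same multiset of vectors and hence differ only by a permutation, so
\[ \mathcal{F}(x^1,x^1,\ldots,x^d,x^d)=\mathcal{F}(x^1,\ldots,x^d,x^1,\ldots,x^d). \]

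Next I would substitute $\mathcal{F}=\sum_{i=1}^{r}\mathcal{A}^i\otimes\mathcal{A}^i$ and use linearity of the form in its tensor argument. For each fixed $i$, the outer product assigns its first $d$ indices to the first copy of $\mathcal{A}^i$ and its last $d$ indices to the second copy; contracting against the \emph{blocked} list therefore factorizes, giving
\[ (\mathcal{A}^i\otimes\mathcal{A}^i)(x^1,\ldots,x^d,x^1,\ldots,x^d)=\mathcal{A}^i(x^1,\ldots,x^d)\cdot\mathcal{A}^i(x^1,\ldots,x^d)=\left(\mathcal{A}^i(x^1,\ldots,x^d)\right)^2. \]
Summing over $i$ yields $\mathcal{F}(x^1,x^1,\ldots,x^d,x^d)=\sum_{i=1}^{r}\left(\mathcal{A}^i(x^1,\ldots,x^d)\right)^2\ge 0$, which is exactly \eqref{co-quadratic-PSD}.

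The one subtlety I would be careful about—and the crux of the argument—is that the individual summands $\mathcal{A}^i$ need not be super-symmetric, so the square does \emph{not} appear if one naively contracts $\mathcal{A}^i\otimes\mathcal{A}^i$ against the original interleaved list: for $d\ge 2$ that contraction splits across the two copies of $\mathcal{A}^i$ and produces a product of two distinct numbers (e.g.\ $\mathcal{A}^i(x^1,x^1)\cdot\mathcal{A}^i(x^2,x^2)$ when $d=2$) rather than a square. Reordering via the super-symmetry of the \emph{aggregate} tensor $\mathcal{F}$ is precisely what aligns both copies of each $\mathcal{A}^i$ with the same tuple $(x^1,\ldots,x^d)$, and this is legitimate because permutation-invariance is a property of $\mathcal{F}$ even though it may fail for each $\mathcal{A}^i$ individually. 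Everything else is routine index bookkeeping.
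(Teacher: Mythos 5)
Your proof is correct and is essentially identical to the paper's: both use the super-symmetry of $\mathcal{F}$ to pass from the interleaved arguments $\left(x^1,x^1,\cdots,x^d,x^d\right)$ to the blocked arguments $\left(x^1,\cdots,x^d,x^1,\cdots,x^d\right)$, then expand the decomposition so that each summand factorizes as $\left(\mathcal{A}^i\left(x^1,\cdots,x^d\right)\right)^2 \ge 0$. Your closing observation---that the reordering must happen at the level of the aggregate $\mathcal{F}$ because the individual $\mathcal{A}^i$ need not be super-symmetric---is a correct reading of why the argument is structured this way, a point the paper leaves implicit.
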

\begin{proof} Since $\cal F$ is super-symmetric, for any $x^1,x^2,\cdots,x^d$,
\begin{eqnarray*}
\mathcal{F}\left(x^1,x^1,x^2,x^2,\cdots,x^d,x^d\right)&=&\mathcal{F}\left(x^1,x^2,\cdots,x^d,x^1,x^2,\cdots,x^d\right)\\
&=&\sum\limits_{i=1}^{r}\mathcal{A}^i\otimes \mathcal{A}^i\left(x^1,x^2,\cdots,x^d,x^1,x^2,\cdots,x^d\right)\\
&=&\sum\limits_{i=1}^{r}\left(\mathcal{A}^i\left(x^1,x^2,\cdots,x^d\right)\right)^2\ge0.
\end{eqnarray*}
\end{proof}
Inequality~\eqref{co-quadratic-PSD} is called co-quadratic positive semidefinite. In~\cite{C12}, it was proved that if $\cal F$ is co-quadratic positive semidefinite then
\begin{equation*}
\mathcal{F}\left(x^1,x^2,\cdots,x^{2d}\right) \le \max\limits_{1 \le i \le 2d}\left\{\mathcal{F}(\underbrace{x^i,\cdots,x^i}_{2d})\right\}.
\end{equation*}
As a result
\begin{equation}\label{multilinear-equivalence}
\max\limits_{\|x\|=1}\mathcal{F}(\underbrace{x,\cdots,x}_{2d})= \max\limits_{\|x^1\|=\cdots=\|x^{2d}\|=1}\mathcal{F}\left(x^1,x^2,\cdots,x^{2d}\right).
\end{equation}
For a solution $X^*$, which is either optimal to \eqref{prob:nuclear-penalty} or \eqref{prob:matrix-pca-SDR}, by Proposition~\ref{tensor-PSD} we know that $\mathcal{X}^* =\matr^{-1}(X^*)$ is co-quadratic positive semidefinite. So the problem~\eqref{Prob:rank-one-projection} on finding the rank-one projection of $\mathcal{X}^*$ is equivalent to
\[\max\limits_{\|x^1\|=\cdots=\|x^{2d}\|=1}\mathcal{X}^*\left(x^1,x^2,\cdots,x^{2d}\right)\]
due to equality~\eqref{multilinear-equivalence}. Essentially, we can resort to a multi-linear problem, which is easier than~\eqref{prob:tensor-pca} and can be solved, for instance, by the MBI method proposed in~\cite{CHLZ11}.
Note that if we apply the MBI method directly to ~\eqref{prob:tensor-pca}, relation~\eqref{multilinear-equivalence} may not be guaranteed: the MBI may get trapped in a local minimum instead of local maximum.
}

\section{Extensions}

In this section, we show how to extend the results in the previous sections for super-symmetric tensor PCA problem to tensors that are not super-symmetric.

\subsection{Bi-quadratic tensor PCA}\label{bi-quadratic-PCA}

A closely related problem to the tensor PCA problem~\eqref{prob:tensor-pca} is the following bi-quadratic PCA problem:
\begin{equation}\label{prob:biquadratic}
\begin{array}{ll} \max & \mathcal{G}(x,y,x,y) \\ \st & x\in \RR^{n},\|x\|=1,\\
                                                   & y\in \RR^{m},\|y\|=1,
\end{array}
\end{equation}
where $\mathcal G $ is a {\it partial-symmetric}\/ tensor defined as follows.
\begin{definition}
A $4$-th order tensor $\mathcal G \in \RR^{{(nm)^2}}$ is called partial-symmetric if $\mathcal{G}_{ijk\ell}=\mathcal{G}_{kji\ell}=\mathcal{G}_{i\ell kj}, \forall i,j,k,\ell$. The space of all $4$-th order partial-symmetric tensor is denoted by $\overrightarrow{\overrightarrow{\SI}}^{(mn)^2}$.
\end{definition}
Various approximation algorithms for bi-quadratic PCA problem have been studied in~\cite{LNWY09}. Problem~\eqref{prob:biquadratic} arises from the strong ellipticity condition problem in solid mechanics and the entanglement problem in quantum physics; see~\cite{LNWY09} for more applications of bi-quadratic PCA problem.

{We can unfold a partial-symmetric tensor $\mathcal{G}$ in a similar manner as in Definition \ref{def:matricization}.
\begin{definition}\label{def:matricization-partial-symmetric}
For $\mathcal{G} \in \overrightarrow{\overrightarrow{\SI}}^{{(nm)^2}}$, we define its square matrix rearrangement, denoted by $\matr(\mathcal{G})\in \RR^{mn\times mn}$, as the following:
\[\matr(\mathcal{G})_{k \ell}:=\mathcal{G}_{i_1i_{2}i_{3}i_{4}}, \; 1\leq i_1,i_{3} \leq n,\;1\leq i_2,i_{4} \leq m\;\mbox{where}\;k =(i_1-1)m+i_2,\mbox{ and }\ell = (i_3-1)m+i_4.\]
\end{definition}
}

It is easy to check that for given vectors $a \in \RR^n$ and $b \in \RR^m$, $a \otimes b \otimes a \otimes b \in \overrightarrow{\overrightarrow{\SI}}^{{(nm)^2}}$. Moreover, we say a partial-symmetric tensor $\mathcal{G}$ is of rank one if $\mathcal{G}={\lambda}\,a \otimes b \otimes a \otimes b$ for some vectors {$a,b$ and scaler $\lambda$}.

Since $\tr(xy^{\T}yx^{\T})=x^{\T}xy^{\T}y=1$, by letting $\mathcal{X}=x \otimes y \otimes x \otimes y$, problem \eqref{prob:biquadratic} is equivalent to
\begin{equation*}
\begin{array}{ll} \max & \mathcal{G} \bullet \mathcal{X} \\ \st & \sum\limits_{i, j}\mathcal{X}_{ijij} =1,\\
                                & \mathcal{X} \in \overrightarrow{\overrightarrow{\SI}}^{{(nm)^2}},\;\rank(\mathcal{X})=1.
\end{array}
\end{equation*}

In the following, we group variables $x$ and $y$ together and treat $x \otimes y$ as a long vector by stacking its columns.
Denote $X=\matr(\mathcal{X})$ and $G = \matr(\mathcal{G})$. Then, we end up with a matrix version of the above tensor problem:
\begin{equation}\label{prob:matrix-biquadratic}
\begin{array}{ll} \max & \tr(G X) \\ \st & \tr(X) =1,\quad X \succeq 0,\\
                                & \matr^{-1}(X) \in \overrightarrow{\overrightarrow{\SI}}^{{(nm)^2}},\;\rank(X)=1.
\end{array}
\end{equation}

As it turns out, the rank-one equivalence theorem can be extended to the partial symmetric tensors. Therefore the above two problems are actually equivalent.
\begin{theorem}\label{theorem:partical-symmetric-rank1} Suppose $A$ is an $n\times m$ dimensional matrix. Then the following two statements are equivalent:\\
(i) $\rank(A)=1$;\\
(ii) $A \otimes A \ \in \overrightarrow{\overrightarrow{\SI}}^{{(nm)^2}}$.\\
In other words, suppose $\mathcal{F} \in  \overrightarrow{\overrightarrow{\SI}}^{{(nm)^2}}$, then $\rank (\mathcal{F})=1 \Longleftrightarrow \rank (F)=1$, where $F = \matr(\mathcal{F})$.
\end{theorem}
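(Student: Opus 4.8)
The plan is to prove Theorem~\ref{theorem:partical-symmetric-rank1} by mimicking the structure used for the super-symmetric case, namely by reducing the partial-symmetric statement to the two implications and then appealing to the (easy) direction plus a minor-based rank argument for the hard direction. The equivalence ``$\rank(\mathcal{F})=1 \Longleftrightarrow \rank(F)=1$'' follows immediately from the claimed equivalence of (i) and (ii) applied to $\mathcal{F}=A\otimes A$: indeed, $\matr(a\otimes b\otimes a\otimes b)=\vect(a\otimes b)\vect(a\otimes b)^{\T}$ shows one direction, while the matricization is linear, so I will focus on establishing the equivalence of (i) and (ii) for a generic $n\times m$ matrix $A$.

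First I would prove the direction (i)$\Longrightarrow$(ii), which is the routine one. If $\rank(A)=1$, then $A=ab^{\T}$ for some $a\in\RR^n$, $b\in\RR^m$, so $A\otimes A=a\otimes b\otimes a\otimes b$. I then verify directly from the definition $\mathcal{G}_{ijk\ell}=\mathcal{G}_{kji\ell}=\mathcal{G}_{i\ell kj}$ that this outer product lies in $\overrightarrow{\overrightarrow{\SI}}^{(nm)^2}$: writing $(A\otimes A)_{ijk\ell}=a_i b_j a_k b_\ell$, swapping $i\leftrightarrow k$ or $j\leftrightarrow \ell$ leaves the product unchanged, so the two partial-symmetry relations hold.

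The harder direction is (ii)$\Longrightarrow$(i), and this is where the main obstacle lies. The hypothesis $A\otimes A\in\overrightarrow{\overrightarrow{\SI}}^{(nm)^2}$ only asserts the partial-symmetry relations $\mathcal{G}_{ijk\ell}=\mathcal{G}_{kji\ell}=\mathcal{G}_{i\ell kj}$ for $\mathcal{G}=A\otimes A$, i.e. $A_{ij}A_{k\ell}=A_{kj}A_{i\ell}=A_{i\ell}A_{kj}$. The key observation is that the relation $A_{ij}A_{k\ell}=A_{kj}A_{i\ell}$, written for all indices, says exactly that every $2\times 2$ minor of $A$ vanishes: $A_{ij}A_{k\ell}-A_{i\ell}A_{kj}=0$ for all $i,k,j,\ell$. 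This is precisely the statement that $A$ has rank at most one. I would therefore extract this minor identity from the partial-symmetry hypothesis and conclude $\rank(A)\le 1$; the case $A=0$ is trivial, and otherwise $\rank(A)=1$.

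The step I expect to require the most care is confirming that the partial-symmetry relations in Definition~\ref{def:matricization-partial-symmetric} translate \emph{exactly} into the vanishing of all $2\times 2$ minors, with no missing index configurations; this is essentially the analogue of the minor argument in Lemma~\ref{lemma:n-mode-rank1}, but I must be careful to track which of the two symmetry relations ($\mathcal{G}_{ijk\ell}=\mathcal{G}_{kji\ell}$ versus $\mathcal{G}_{ijk\ell}=\mathcal{G}_{i\ell kj}$) yields the minor condition and to verify the diagonal/degenerate index cases separately. Once the minor identity is in hand, the conclusion that $A$ is rank-one is immediate, so the essential content of the theorem is this translation between partial symmetry of $A\otimes A$ and the rank of $A$.
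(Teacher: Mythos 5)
Your proposal is correct and follows essentially the same route as the paper's own proof: the easy direction is the identical outer-product verification, and for (ii)$\Longrightarrow$(i) the paper uses exactly the chain $A_{i_1j_1}A_{i_2j_2}=\mathcal{G}_{i_1j_1i_2j_2}=\mathcal{G}_{i_2j_1i_1j_2}=A_{i_2j_1}A_{i_1j_2}$ to conclude that every $2\times 2$ minor of $A$ vanishes, which is precisely the symmetry relation $\mathcal{G}_{ijk\ell}=\mathcal{G}_{kji\ell}$ you singled out. The only caveat you already handle (and the paper leaves implicit) is the degenerate case $A=0$, so there is no gap.
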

\begin{proof} (i) $\Longrightarrow$ (ii) is obvious. Suppose $\rank(A)=1$, say $A=ab^{\T}$ for some $a\in \RR^n$ and $b \in \RR^m$. Then $\mathcal{G}=A \otimes A = a \otimes b \otimes a \otimes b$ is partial-symmetric.

Conversely, suppose $\mathcal{G}=A \otimes A \in  \overrightarrow{\overrightarrow{\SI}}^{{(nm)^2}}$. Then
$$
A_{i_1j_1}A_{i_2j_2}=\mathcal{G}_{i_1j_1i_2j_2}=\mathcal{G}_{i_2j_1i_1j_2}=A_{i_2j_1}A_{i_1j_2}, \;\forall 1\le i_1,i_2 \le n,\;1\le j_1,j_2 \le m,
$$
implies $A_{i_1j_1}A_{i_2j_2} - A_{i_2j_1}A_{i_1j_2}=0$. That is, every $2 \times 2$ minor of matrix $A$ is zero. Thus $A$ is of rank one.
\end{proof}

By using the similar argument in Theorem~\ref{thoerem:SDR-NuclearNormPenalty}, we can show that the following SDP relaxation of~\eqref{prob:matrix-biquadratic} has a good chance to get a low rank solution.
\begin{equation}\label{prob:relax-matrix-biquadratic}
\begin{array}{ll} \max & \tr(G X) \\ \st & \tr(X) =1,\quad X \succeq 0,\\
                                & \matr^{-1}(X) \in \overrightarrow{\overrightarrow{\SI}}^{{(nm)^{2}}}.
\end{array}
\end{equation}
Therefore, we used the same ADMM to solve~\eqref{prob:relax-matrix-biquadratic}. The frequency of returning rank-one solutions for randomly created examples is reported in Table~\ref{tab:rank1-frequency-biquadratic}. As in Table~\ref{tab:rank1-frequency-NNP} and Table~\ref{tab:rank1-frequency-SDR}, we tested $100$ random instances for each $(n,m)$ and report the number of instances that produced rank-one solutions. We also report the average CPU time (in seconds) using ADMM to solve the problems. Table~\ref{tab:rank1-frequency-biquadratic} shows that the SDP relaxation \eqref{prob:relax-matrix-biquadratic} can give a rank-one solution for {\it most} randomly created instances, and thus { is likely to} solve the original problem~\eqref{prob:biquadratic} to optimality.
 \begin{table*}[htb]\small
\centering
\begin{tabular}{c|c|c}
\hline
Dim ($n,m$) &$\rank$-1& CPU\\
\hline
 (4,4) & 100 & 0.12\\
 (4,6) & 100 & 0.25 \\
 (6,6) & 100 & 0.76 \\
 (6,8) & 100 & 1.35 \\
 (8,8) & 98 & 2.30 \\
 (8,10) & 100 & 3.60\\
 (10,10) & 96 & 5.77\\
\hline
\end{tabular}
\caption{Frequency of problem~\eqref{prob:relax-matrix-biquadratic} having rank-one solution}
\label{tab:rank1-frequency-biquadratic}
\end{table*}

\subsection{Tri-linear tensor PCA}\label{tri-linear}
Now let us consider a highly non-symmetric case: tri-linear PCA.
\begin{equation}\label{prob:tri-linear}
\begin{array}{ll} \max & \mathcal{F}(x,y,z) \\ \st & x\in \RR^{n},\|x\|=1,\\
                                                   & y\in \RR^{m},\|y\|=1,\\
                                                   & z\in \RR^{\ell},\;\, \|z\|=1,
\end{array}
\end{equation}
where $\mathcal {F} \in \RR^{n \times m \times \ell}$ is any $3$-rd order tensor and $n \le m \le \ell$.

Recently, tri-linear PCA problem was found to be very useful in many practical problems.
For instance, Wang and Ahuja~\cite{WA04} proposed a tensor rank-one decomposition algorithm to compress image sequence, where they essentially solve a sequence of tri-linear PCA problems.

By the Cauchy-Schwarz inequality, the problem~\eqref{prob:tri-linear} is equivalent to
\begin{equation*}
\begin{array}{ll} \max & \|\mathcal{F}(x,y,\cdot)\| \\ \st & x\in \RR^{n},\|x\|=1,\\
                                                   & y\in \RR^{m},\|y\|=1,
\end{array}
\Longleftrightarrow
\begin{array}{ll} \max & \|\mathcal{F}(x,y,\cdot)\|^2 \\ \st & x\in \RR^{n},\|x\|=1,\\
                                                   & y\in \RR^{m},\|y\|=1.
\end{array}
\end{equation*}
We further notice
\begin{eqnarray*}
\|\mathcal{F}(x,y,\cdot)\|^2
&=& \mathcal{F}(x,y,\cdot)^{\T}\mathcal{F}(x,y,\cdot)=\sum\limits_{k=1}^{\ell}\mathcal{F}_{ijk}\,\mathcal{F}_{uvk}\,x_iy_jx_uy_v \\
&=&\sum\limits_{k=1}^{\ell}\mathcal{F}_{ivk}\,\mathcal{F}_{ujk}\,x_iy_vx_uy_j=\sum\limits_{k=1}^{\ell}\mathcal{F}_{ujk}\,\mathcal{F}_{ivk}\,x_uy_jx_iy_v.
\end{eqnarray*}
Therefore, we can find a partial symmetric tensor $\mathcal{G}$ with
$$\mathcal{G}_{ijuv}=\sum\limits_{k=1}^{\ell}\left(\mathcal{F}_{ijk}\,\mathcal{F}_{uvk} + \mathcal{F}_{ivk}\,\mathcal{F}_{ujk}+\mathcal{F}_{ujk}\,\mathcal{F}_{ivk}\right)/3,\,\,\,\,\forall \; i,j,u,v,$$
such that $\|\mathcal{F}(x,y,\cdot)\|^2  = \mathcal{G}\left( x,y,x,y \right)$. Hence, the tri-linear problem~\eqref{prob:tri-linear} can be equivalently formulated in the form of problem~\eqref{prob:biquadratic}, which can be solved by the method proposed in the previous subsection.

\subsection{Quadri-linear tensor PCA}

In this subsection, we consider the following quadri-linear PCA problem:
\begin{equation}\label{prob:quadri-linear-pca}
\begin{array}{ll} \max & \mathcal{F}(x^1,x^2,x^3,x^4) \\ \st & x^i \in \RR^{n_i}, \|x^i\|=1,\;\forall\; i=1,2,3,4,
\end{array}
\end{equation}
where $\mathcal{F} \in \RR^{n_1 \times \cdots \times n_{4}}$ with $n_1 \le n_3 \le n_2 \le n_4$.
Let us first convert the quadri-linear function $\mathcal{F}(x^1,x^2,x^3,x^4)$ to a bi-quadratic function $\mathcal{T}\left({{x^1} \atop{x^3}},{{x^2} \atop{x^4}},{{x^1} \atop{x^3}},{{x^2} \atop{x^4}}\right)$ with $\mathcal{T}$ being partial symmetric. To this end, we first construct $\mathcal {G}$ with
$$
\mathcal{G}_{i_1,i_2, n+i_3, n+i_4}=\left\{
\begin{array}{cl} \mathcal{F}_{j_1j_2 j_3 j_4}, & \mbox{if}\;1 \le i_k \le n_k \\
0, & \mbox{otherwise}.
 \end{array}\right.
$$
Consequently, we have $\mathcal{F}(x^1,x^2,x^3,x^4) = \mathcal{G}\left({{x^1} \atop{x^3}},{{x^2} \atop{x^4}},{{x^1} \atop{x^3}},{{x^2} \atop{x^4}}\right)$. Then we can further partial-symmetrize $\mathcal{G}$ and the desired tensor $\mathcal{T}$ is as follows,
\[
\mathcal{T}_{i_1i_2 i_3 i_4}= \frac{1}{4}\left(\mathcal{G}_{i_1i_2 i_3 i_4}+\mathcal{G}_{i_1i_4 i_3 i_2}+\mathcal{G}_{i_3i_2 i_1 i_4}+\mathcal{G}_{i_3i_4 i_1 i_2}  \right)\quad\forall\; i_1,i_{2},i_3,i_4,
\]
satisfying $\mathcal{T}\left({{x^1} \atop{x^3}},{{x^2} \atop{x^4}},{{x^1} \atop{x^3}},{{x^2} \atop{x^4}}\right)=\mathcal{G}\left({{x^1} \atop{x^3}},{{x^2} \atop{x^4}},{{x^1} \atop{x^3}},{{x^2} \atop{x^4}}\right)$.
Therefore, problem~\eqref{prob:quadri-linear-pca} is now reformulated as a bi-quadratic problem:
\begin{equation}\label{prob:biquadratic-sphere-a}
\begin{array}{ll} \max & \mathcal{T}\left({{x^1} \atop{x^3}},{{x^2} \atop{x^4}},{{x^1} \atop{x^3}},{{x^2} \atop{x^4}}\right) \\ \st & x^i \in \RR^{n_i}, \|x^i\|=1,\;\forall\; i=1,\ldots,4.
\end{array}
\end{equation}

Moreover, we can show that the above problem is actually a bi-quadratic problem in the form of~\eqref{prob:biquadratic}.
\begin{proposition}Suppose $\mathcal{T}$ is a fourth order partial symmetric tensor. Then
problem~\eqref{prob:biquadratic-sphere-a} is equivalent to
\begin{equation}\label{prob:biquadratic-sphere-b}
\begin{array}{ll} \max & \mathcal{T}\left({{x^1} \atop{x^3}},{{x^2} \atop{x^4}},{{x^1} \atop{x^3}},{{x^2} \atop{x^4}}\right) \\ \st &\sqrt{\|x^1\|^2+ \|x^3\|^2}=\sqrt{2},\\&\sqrt{\|x^2\|^2+ \|x^4\|^2}=\sqrt{2}.
\end{array}
\end{equation}
\end{proposition}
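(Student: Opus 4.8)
The plan is to prove that the two problems share the same optimal value; write $v_a$ and $v_b$ for the optimal values of \eqref{prob:biquadratic-sphere-a} and \eqref{prob:biquadratic-sphere-b} respectively. The structural fact I would record first, because it drives everything, is that by the construction of $\mathcal{T}$ through $\mathcal{G}$ the objective of both problems equals the multilinear form $\mathcal{F}(x^1,x^2,x^3,x^4)$; in particular it is homogeneous of degree one in each of $x^1,x^2,x^3,x^4$ \emph{separately}, and the two constraint groups couple $x^1$ with $x^3$ (both sitting in the first argument block) and $x^2$ with $x^4$ (both in the second).

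The inequality $v_a\le v_b$ is immediate: any $(x^1,x^2,x^3,x^4)$ with $\|x^i\|=1$ for all $i$ automatically satisfies $\|x^1\|^2+\|x^3\|^2=2$ and $\|x^2\|^2+\|x^4\|^2=2$, so the feasible set of \eqref{prob:biquadratic-sphere-a} is contained in that of \eqref{prob:biquadratic-sphere-b} while the objective is unchanged. At this point I would also note that $v_a\ge 0$: since $\mathcal{F}(-x^1,x^2,x^3,x^4)=-\mathcal{F}(x^1,x^2,x^3,x^4)$ and the sign flip preserves feasibility, the range of the objective over the compact nonempty feasible set is symmetric about $0$, so its maximum is nonnegative. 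Hence $v_b\ge v_a\ge 0$.

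For the reverse inequality $v_b\le v_a$, I would take an optimal solution $(x^1,x^2,x^3,x^4)$ of \eqref{prob:biquadratic-sphere-b}. If $v_b=0$ there is nothing to prove, so assume $v_b>0$; then multilinearity forces every $x^i\neq 0$. Setting $\hat x^i=x^i/\|x^i\|$ and using degree-one homogeneity in each block gives
\[
v_b=\|x^1\|\,\|x^2\|\,\|x^3\|\,\|x^4\|\cdot \mathcal{F}(\hat x^1,\hat x^2,\hat x^3,\hat x^4),
\]
and $v_b>0$ forces $\mathcal{F}(\hat x^1,\hat x^2,\hat x^3,\hat x^4)>0$. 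Applying AM--GM to each constraint group, $\|x^1\|^2+\|x^3\|^2=2$ yields $\|x^1\|\,\|x^3\|\le 1$ and $\|x^2\|^2+\|x^4\|^2=2$ yields $\|x^2\|\,\|x^4\|\le 1$, so the product of the four norms is at most $1$. Since the unit-vector value is positive, this gives
\[
v_b\le \mathcal{F}(\hat x^1,\hat x^2,\hat x^3,\hat x^4)\le v_a,
\]
the last step because $(\hat x^1,\hat x^2,\hat x^3,\hat x^4)$ is feasible for \eqref{prob:biquadratic-sphere-a}. Together with $v_a\le v_b$ this gives $v_a=v_b$, and the same computation shows how to read off an optimizer of \eqref{prob:biquadratic-sphere-a} from one of \eqref{prob:biquadratic-sphere-b}.

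The part needing the most care—and where I expect the only genuine subtlety—is the sign bookkeeping: the AM--GM rescaling only decreases the objective when $\mathcal{F}(\hat x^1,\hat x^2,\hat x^3,\hat x^4)$ is nonnegative, so one must first dispose of the regime $v_b\le 0$ (handled by the oddness observation giving $v_a\ge 0$) and of the degenerate case where some block vanishes (harmless, since then the objective is $0$ by multilinearity). Everything else is routine homogeneity together with the elementary AM--GM step on the two decoupled norm constraints.
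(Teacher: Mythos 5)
Your proof is correct and takes essentially the same route as the paper's: one direction is the feasible-set inclusion (so \eqref{prob:biquadratic-sphere-b} is a relaxation of \eqref{prob:biquadratic-sphere-a}), and tightness is obtained exactly as in the paper by normalizing an optimizer of \eqref{prob:biquadratic-sphere-b}, using degree-one homogeneity in each block, and applying AM--GM to the two coupled norm constraints to bound $\|x^1\|\|x^3\|\le 1$ and $\|x^2\|\|x^4\|\le 1$. The only difference is cosmetic but welcome: you explicitly justify nonnegativity of the optimal value via the sign-flip (oddness) argument and dispose of the degenerate case where the optimum is zero or some block vanishes, whereas the paper simply asserts that the optimal value is positive.
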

\begin{proof} It is obvious that~\eqref{prob:biquadratic-sphere-b} is a relaxation of~\eqref{prob:biquadratic-sphere-a}. To further prove that the relaxation~\eqref{prob:biquadratic-sphere-b} is tight, we assume $(\hat{x}^1, \hat{x}^2, \hat{x}^3, \hat{x}^4)$ is optimal to~\eqref{prob:biquadratic-sphere-b}. Then $\mathcal{T}\left({{\hat{x}^1} \atop{\hat{x}^3}},{{\hat{x}^2} \atop{\hat{x}^4}},{{\hat{x}^1} \atop{\hat{x}^3}},{{\hat{x}^2} \atop{\hat{x}^4}}\right)= \mathcal{F}(\hat{x}^1,\hat{x}^2,\hat{x}^3,\hat{x}^4)>0$, and so $\hat{x}^i \neq 0$ for all $i$. Moreover, notice that
$$
\sqrt{\|\hat{x}^1\| \|\hat{x}^3\|} \le \sqrt{ \frac{ \| \hat{x}^1\|^2 + \| \hat{x}^3\|^2 }{2}}=1 \;\mbox{and} \; \sqrt{\|\hat{x}^2\|\|\hat{x}^4\|} \le \sqrt{\frac{\|\hat{x}^2\|^2 + \|\hat{x}^4\|^2}{2}}=1.
$$
Thus
\begin{eqnarray*}
\mathcal{T}\left({{\frac{\hat{x}^1}{\|\hat{x}^1\|}} \atop{\frac{\hat{x}^3}{\|\hat{x}^3\|}}},{{\frac{\hat{x}^2}{\|\hat{x}^2\|}} \atop{\frac{\hat{x}^4}{\|\hat{x}^4\|}}},{{\frac{\hat{x}^1}{\|\hat{x}^1\|}} \atop{\frac{\hat{x}^3}{\|\hat{x}^3\|}}},{{\frac{\hat{x}^2}{\|\hat{x}^2\|}} \atop{\frac{\hat{x}^4}{\|\hat{x}^4\|}}}\right)&=& \mathcal{F}\left(\frac{\hat{x}^1}{\|\hat{x}^1\|},\frac{\hat{x}^2}{\|\hat{x}^2\|},\frac{\hat{x}^3}{\|\hat{x}^3\|},\frac{\hat{x}^4}{\|\hat{x}^4\|}\right)\\
&=&\frac{\mathcal{F}(\hat{x}^1,\hat{x}^2,\hat{x}^3,\hat{x}^4)}{\|\hat{x}^1\|\|\hat{x}^2\|\|\hat{x}^3\|\|\hat{x}^4\|}\\
&\ge&\mathcal{F}(\hat{x}^1,\hat{x}^2,\hat{x}^3,\hat{x}^4)\\
&=&\mathcal{T}\left({{\hat{x}^1} \atop{\hat{x}^3}},{{\hat{x}^2} \atop{\hat{x}^4}},{{\hat{x}^1} \atop{\hat{x}^3}},{{\hat{x}^2} \atop{\hat{x}^4}}\right).
\end{eqnarray*}
To summarize, we have found a feasible solution $\left(\frac{\hat{x}^1}{\|\hat{x}^1\|} , \frac{\hat{x}^2}{\|\hat{x}^2\|}, \frac{\hat{x}^3}{\|\hat{x}^3\|}, \frac{\hat{x}^4}{\|\hat{x}^4\|}\right)$ of~\eqref{prob:biquadratic-sphere-a}, which is optimal to its relaxation~\eqref{prob:biquadratic-sphere-b} and thus this relaxation is tight.
\end{proof}

By letting $y= \left({{x^1} \atop{x^3}}\right)$, $z=\left({{x^2} \atop{x^4}}\right)$ and using some scaling technique, we can see that problem~\eqref{prob:biquadratic-sphere-b} share the same solution with
\begin{equation*}
\begin{array}{ll} \max & \mathcal{T}\left(y,z,y,z\right) \\ \st & \|y\|=1,\\&\|z\| =1,
\end{array}
\end{equation*}
which was studied in Subsection~\ref{bi-quadratic-PCA}.

\subsection{Even order multi-linear PCA}
The above discussion can be extended to the even order multi-linear PCA problem:
\begin{equation}\label{prob:multi-linear-pca}
\begin{array}{ll} \max & \mathcal{F}(x^1,x^2,\cdots,x^{2d}) \\ \st & x^i \in \RR^{n_i}, \|x^i\|=1,\;\forall\; i=1,2,\ldots,2d,
\end{array}
\end{equation}
where $\mathcal{F} \in \Br^{n^1 \times \cdots \times n^{2d}}$. An immediate relaxation of~\eqref{prob:multi-linear-pca} is the following
\begin{equation}\label{prob:multi-linear-pca-b}
\begin{array}{ll} \max & \mathcal{F}(x^1,x^2,\cdots,x^{2d}) \\ \st & x^i \in \RR^{n_i}, \sqrt{\sum\limits_{i}^{2d}\|x^i\|^2 }=\sqrt{2d}.
\end{array}
\end{equation}
The following result shows that these two problems are actually equivalent.
\begin{proposition}It holds that problem~\eqref{prob:multi-linear-pca} is equivalent to~\eqref{prob:multi-linear-pca-b}.
\end{proposition}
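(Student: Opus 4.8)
The plan is to mimic the argument used for the quadri-linear case in the preceding proposition, replacing the two-group estimate by a single AM--GM inequality across all $2d$ blocks. First I would observe that any point feasible for~\eqref{prob:multi-linear-pca}, i.e.\ one with $\|x^i\|=1$ for every $i$, automatically satisfies $\sqrt{\sum_{i=1}^{2d}\|x^i\|^2}=\sqrt{2d}$, so~\eqref{prob:multi-linear-pca-b} is genuinely a relaxation and its optimal value is at least that of~\eqref{prob:multi-linear-pca}. It therefore suffices to establish the reverse inequality, namely that the relaxation is tight.

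To this end, let $(\hat{x}^1,\ldots,\hat{x}^{2d})$ be optimal for~\eqref{prob:multi-linear-pca-b}. I may assume the optimal value is strictly positive: if $\mathcal{F}$ vanishes identically on the feasible set, both problems have value zero and are trivially equivalent; otherwise a suitable sign flip of one block shows the maximum is positive. In particular $\mathcal{F}(\hat{x}^1,\ldots,\hat{x}^{2d})>0$ forces $\hat{x}^i\neq 0$ for all $i$, so the normalized vectors $\hat{x}^i/\|\hat{x}^i\|$ are well defined and feasible for~\eqref{prob:multi-linear-pca}.

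The key step is the estimate $\prod_{i=1}^{2d}\|\hat{x}^i\|\le 1$, which follows from the AM--GM inequality applied to the squared norms:
\[
\left(\prod_{i=1}^{2d}\|\hat{x}^i\|^2\right)^{1/(2d)}\le \frac{1}{2d}\sum_{i=1}^{2d}\|\hat{x}^i\|^2 = 1,
\]
where the last equality is precisely the constraint of~\eqref{prob:multi-linear-pca-b}. Using multilinearity of $\mathcal{F}$ together with $\mathcal{F}(\hat{x}^1,\ldots,\hat{x}^{2d})>0$, I then conclude
\[
\mathcal{F}\left(\frac{\hat{x}^1}{\|\hat{x}^1\|},\ldots,\frac{\hat{x}^{2d}}{\|\hat{x}^{2d}\|}\right)
=\frac{\mathcal{F}(\hat{x}^1,\ldots,\hat{x}^{2d})}{\prod_{i=1}^{2d}\|\hat{x}^i\|}
\ge \mathcal{F}(\hat{x}^1,\ldots,\hat{x}^{2d}).
\]
Thus the normalized point is feasible for~\eqref{prob:multi-linear-pca} with objective value no smaller than the optimum of~\eqref{prob:multi-linear-pca-b}, which yields the reverse inequality and hence the claimed equivalence.

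I expect the only genuinely delicate point to be the positivity reduction that guarantees $\hat{x}^i\neq 0$ and fixes the direction of the final inequality; once that is in place, the AM--GM bound and the homogeneity rescaling are routine, and everything else is bookkeeping parallel to the $d=2$ case already treated.
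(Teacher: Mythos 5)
Your proof is correct and takes essentially the same route as the paper's: pick an optimizer of the relaxation~\eqref{prob:multi-linear-pca-b}, apply AM--GM to the squared norms to obtain $\prod_{i=1}^{2d}\|\hat{x}^i\|\le 1$, and rescale each block by multilinearity to produce a feasible point of~\eqref{prob:multi-linear-pca} with no smaller objective value. If anything, your explicit justification that the optimal value is strictly positive (via a sign flip of one block, with the degenerate case $\mathcal{F}\equiv 0$ handled separately) fills in a step the paper asserts without comment.
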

\begin{proof} It suffices to show that relaxation~\eqref{prob:multi-linear-pca-b} is tight. To this end, suppose $(\hat{x}^1,\cdots, \hat{x}^{2d})$ is an optimal solution of~\eqref{prob:multi-linear-pca-b}. Then $\mathcal{F}(\hat{x}^1,\hat{x}^2,\cdots,\hat{x}^{2d}) >0$ and so $\hat{x}^i \neq 0$ for $i=1,\ldots, 2d$.
We also notice
$$
\sqrt{\bigg{(}\prod\limits_{i=1}^{2d}\|\hat{x}_i\|^2\bigg{)}^{\frac{1}{2d}}  } \le \sqrt{{\sum\limits_{i}^{2d}\|\hat{x}^i\|^2}/{2d}}=1.
$$
Consequently, $\prod\limits_{i=1}^{2d}\|\hat{x}_i\| \le 1$ and
$$
\mathcal{F}\left(\frac{\hat{x}^1}{\|\hat{x}^1 \|},\frac{\hat{x}^2}{\|\hat{x}^2 \|},\cdots,\frac{\hat{x}^{2d}}{\|\hat{x}^{2d} \|}\right)=\frac{\mathcal{F}(\hat{x}^1,\hat{x}^2,\cdots,\hat{x}^{2d})}{\prod\limits_{i=1}^{2d}\|\hat{x}_i\|} \ge \mathcal{F}(\hat{x}^1,\hat{x}^2,\cdots,\hat{x}^{2d}).
$$
Therefore, we have found a feasible solution $\left(\frac{\hat{x}^1}{\|\hat{x}^1 \|},\frac{\hat{x}^2}{\|\hat{x}^2 \|},\cdots,\frac{\hat{x}^{2d}}{\|\hat{x}^{2d} \|}\right)$ of~\eqref{prob:multi-linear-pca}, which is optimal to~\eqref{prob:multi-linear-pca-b} implying that the relaxation is tight.
\end{proof}
We now focus on~\eqref{prob:multi-linear-pca-b}. Based on $\mathcal{F}$, we can construct a larger tensor $\mathcal{G}$ as follows
$$
\mathcal{G}_{i_1\cdots i_{2d}}=\left\{
\begin{array}{cl} \mathcal{F}_{j_1\cdots j_{2d}}, &\mbox{if}\;1+\sum\limits_{\ell =1}^{k-1}n_{\ell}\le i_k \le \sum\limits_{\ell =1}^{k}n_{\ell}
\mbox{ and } j_k=i_k-\sum\limits_{\ell =1}^{k-1}n_{\ell}
\\
0,& \mbox{otherwise}.
 \end{array}\right.
$$
By this construction, we have
$$ \mathcal{F}(x^1,x^2,\cdots,x^{2d})  = \mathcal{G}(\underbrace{y,\cdots,y}_{2d})$$
with $y=((x^1)^{\T},(x^2)^{\T},\cdots,(x^{2d})^{\T})^{\T}$.
We can further symmetrize $\mathcal{G}$ and find a super-symmetric $\mathcal{T}$ such that
\[
\mathcal{T}_{i_1\cdots i_{2d}} := \frac{1}{|\pi(i_1\cdots i_{2d})|}\sum_{j_1\cdots j_{2d} \in\pi(i_1\cdots i_{2d})} \mathcal{G}_{j_1\cdots j_{2d}},\quad\forall\,1\le i_1,\cdots,i_{2d} \le \sum\limits_{\ell =1}^{2d}n_{\ell},
\]
and
$$
\mathcal{T}(\underbrace{y,\cdots,y}_{2d})=\mathcal{G}(\underbrace{y,\cdots,y}_{2d})=\mathcal{F}(x^1,x^2,\cdots,x^{2d}).
$$
Therefore, problem~\eqref{prob:multi-linear-pca-b} is equivalent to
\begin{equation*}
\begin{array}{ll} \max & \mathcal{T}(\underbrace{y,\cdots,y}_{2d}) \\ \st & \left\|y\right\|=\sqrt{2d},
\end{array}
\end{equation*}
which is further equivalent to
\begin{equation*}
\begin{array}{ll} \max & \mathcal{T}(\underbrace{y,\cdots,y}_{2d}) \\ \st & \left\|y\right\|=1.
\end{array}
\end{equation*}
Thus the methods we developed for solving~\eqref{prob:tensor-pca} can be applied to solve~\eqref{prob:multi-linear-pca}.

\subsection{Odd degree tensor PCA}
The last problem studied in this section is the following odd degree tensor PCA problem:
\begin{equation}\label{prob:odd-tensor-pca}
\begin{array}{ll} \max & \mathcal{F}(\underbrace{x,\cdots,x}_{2d+1}) \\ \st & \|x\|=1,
\end{array}
\end{equation}
where $\mathcal{F}$ is a $(2d+1)$-th order super-symmetric tensor. As the degree is odd,
$$
\max_{\|x\|=1} \mathcal{F}(\underbrace{x,\cdots,x}_{2d+1})=\max_{\|x\|=1} |\mathcal{F}(\underbrace{x,\cdots,x}_{2d+1})|=\max_{\|x^i\|_2^2=1,\;i=1,\ldots,2d+1}| \mathcal{F}(x^1,\cdots,x^{2d+1})|,
$$
where the last identity is due to Corollary 4.2 in~\cite{CHLZ11}. The above formula combined with the fact that
$$
\max_{\|x\|=1} |\mathcal{F}(\underbrace{x,\cdots,x}_{2d+1})|\le \max_{\|x\|=1,\;\|y\|=1} |\mathcal{F}(\underbrace{x,\cdots,x}_{2d},y)| \le \max_{\|x^i\|=1,\;i=1,\ldots,2d+1}| \mathcal{F}(x^1,\cdots,x^{2d+1})|
$$
implies
$$
\max_{\|x\|=1} \mathcal{F}(\underbrace{x,\cdots,x}_{2d+1})=\max_{\|x\|=1,\;\|y\|=1} |\mathcal{F}(\underbrace{x,\cdots,x}_{2d},y)| =\max_{\|x\|=1,\;\|y\|=1} \mathcal{F}(\underbrace{x,\cdots,x}_{2d},y).
$$
By using the similar technique as in Subsection~\ref{tri-linear}, problem~\eqref{prob:odd-tensor-pca} is equivalent to an even order tensor PCA problem:
\begin{equation*}
\begin{array}{ll} \max & \mathcal{G}(\underbrace{x,\cdots,x}_{4d}) \\ \st & \|x\|=1,
\end{array}
\end{equation*}
where $\mathcal{G}$ is super-symmetric with
$$\mathcal{G}_{i_1,\cdots,i_{4d}}= \frac{1}{|\pi(i_1\cdots i_{4d})|} \sum\limits_{k=1}^{n}\left(\sum\limits_{j_1\cdots j_{4d}\in \pi(i_1\cdots i_{4d})}\mathcal{F}_{i_1\cdots i_{2d}k}\,\mathcal{F}_{i_{2d+1}\cdots i_{4d}k}\right).$$

\section{Conclusions}

Tensor principal component analysis is an emerging area of research with many important applications in image processing, data analysis, statistical learning, and bio-informatics. In this paper we introduced a new matricization scheme, which ensures that if the tensor is of rank one (in the sense of CP rank), then its matricization is a rank-one matrix, and vice versa. This enables one to apply the methodology in compressive sensing and matrix rank minimization, in particular the $L_1$-norm and nuclear norm optimization techniques. As it turns out, this approach {is very likely to} yield a rank-one solution. This effectively {finds the leading PC} by convex optimization, at least for randomly generated problem instances. The resulting convex optimization model is still large in general. We proposed to use the first-order method such as the ADMM method, which turns out to be very effective in this case. {Multiple principal components can be computed sequentially via the so-called ``deflation'' technique.}

{
{\bf Acknowledgements.} We would like to thank Fei Wang and Yiju Wang for sharing with us their codes of Z-eigenvalue methods.}
\bibliographystyle{plain}
\bibliography{All}

\end{document}